\newcommand{\RR}{\mathbb R}
\newcommand{\NN}{\mathbb N}
\renewcommand{\leq}{\leqslant}
\renewcommand{\geq}{\geqslant}
\journalname{}
\begin{document}

\title{Nonhomogeneous Hemivariational Inequalities with Indefinite Potential and Robin Boundary Condition}
\subtitle{}
\titlerunning{Nonhomogeneous Hemivariational Inequalities with Indefinite Potential}
\author{Nikolaos S. Papageorgiou,  Vicen\c tiu D. R\u{a}dulescu,  Du\v san~D.~Repov\v s}
\institute{N.S. Papageorgiou \at Department of Mathematics, National Technical University,
				Zografou Campus, Athens 15780, Greece\\
              \email{npapg@math.ntua.gr}      
 \and
           V.D. R\u{a}dulescu,   \at
              Department of Mathematics, Faculty of Sciences, King Abdulaziz University, P.O. Box 80203, Jeddah 21589, Saudi
              Arabia\\
              Department of Mathematics, University of Craiova, Street A.I. Cuza No 13, 200585 Craiova, Romania\\
              \email{vicentiu.radulescu@imar.ro; vicentiu.radulescu@math.cnrs.fr}
 \and D.D. Repov\v s \at Faculty of Education and Faculty of Mathematics and Physics,
University of Ljubljana, SI-1000 Ljubljana, Slovenia \\
              \email{dusan.repovs@guest.arnes.si}
}
\date{} 

\maketitle

\begin{abstract}
We consider a nonlinear, nonhomogeneous Robin problem with an indefinite potential and a nonsmooth primitive in the reaction term. In fact, the right-hand side of the problem (reaction term) is the Clarke subdifferential of a locally Lipschitz integrand. We assume that asymptotically this term is resonant with respect the principal eigenvalue (from the left). We prove the existence of three nontrivial smooth solutions, two of constant sign and the third nodal. We also show the existence of extremal constant sign solutions. The tools come from nonsmooth critical point theory and from global optimization (direct method).
\keywords{locally Lipschitz function \and Clarke subdifferential \and resonance \and extremal constant sign solutions \and nodal solutions \and nonlinear nonhomogeneous differential operator}
\subclass{35J20 \and 35J60 \and 35Q93 \and 47J20 \and 58E35}
\end{abstract}

\section{Introduction}
In this paper, we study a class of nonlinear elliptic partial differential inclusions with Robin boundary condition and involving a nonhomogeneous differential operator. The resulting inclusion is known in the literature as  {a} ``hemivariational inequality". Hemivariational inequalities were introduced as an extension of the classical variational inequalities, in order to deal with problems of mechanics and engineering in which the relevant energy functionals are neither convex nor smooth (the so-called super-potentials). Many such applications can be found in the book  {by}  Panagiotopoulos [1].

Our aim is to prove a ``three solutions theorem",  {which provides} regularity and sign information for all of them.
Such multiplicity results were proved by Liu [2], Liu and Liu [3], Papageorgiou and Papageorgiou [4] for Dirichlet problems driven by the $p$-Laplacian with zero potential and with a smooth primitive. However, none of the aforementioned works allows for resonance to occur and they do not produce nodal (that is, sign changing) solutions. Multiple nontrivial smooth solutions for Neumann $p$-Laplacian hemivariational inequalities were obtained by Aizicovici, Papageorgiou and Staicu [5,6]. In [5] the potential function $\xi\equiv 0$ and the multivalued nonlinearity  is crossing, but the situation is complementary to the one studied here. The authors, using degree theory techniques,
 produce two nontrivial smooth solutions with no sign information. In [6], the potential $\xi\equiv\xi_0\in(0,+\infty)$ and the multivalued reaction is $(p-1)$-superlinear. Finally, we  {also mention recent work} on nonlinear Neumann and Robin problems with a smooth primitive, by Papageorgiou and R\u adulescu [7, 8],
   {Marano and Papageorgiou [9, 10], Papageorgiou and R\u adulescu [11], and Papageorgiou and Winkert [12]}.

\section{Statement of the Problem}
Let $\Omega\subseteq \RR^N$ be a bounded domain with a $C^2$-boundary $\partial\Omega$. In this paper we study the following nonlinear elliptic partial differential inclusion:
\begin{equation}\label{eq1}
	\left\{\begin{array}{ll}
		-{\rm div}\,a(Du(z))+\xi(z)|u(z)|^{p-2}u(z)\in\partial F(z,u(z))\ \mbox{in}\ \Omega, \\
	\displaystyle	\frac{\partial u}{\partial n_{a}}+\beta(z)|u(z)|^{p-2}u(z)=0\ \mbox{on}\ \partial\Omega.
	\end{array}\right\}
\end{equation}

In this inclusion, the map $a:\RR^N\rightarrow \RR^N$ involved in the definition of the differential operator, is a strictly monotone, continuous map which satisfies certain other regularity and growth conditions listed in hypotheses $H(a)$ (see Section \ref{sec2}). These hypotheses are general enough to incorporate in our framework many differential operators of interest, such as the $p$-Laplacian. We stress that in our case the differential operators need not be homogeneous. The potential function $\xi\in L^{\infty}(\Omega)$ is in general, sign changing (indefinite potential). In the reaction term (right-hand side of (\ref{eq1})), $F(z,x)$ is a real valued function on $\Omega\times\RR$ which is measurable in $z\in\Omega$ for every $x\in\RR$ and locally Lipschitz in $x\in\RR$ for $\mu$-a.a. $z\in\Omega$. By $\partial F(z,x)$ we denote the generalized subdifferential in the sense of Clarke (see Section \ref{sec2}).

 In the boundary condition, $\frac{\partial u}{\partial n_a}$ denotes the generalized normal derivative defined by extension of the map
$$C^1(\overline{\Omega})\ni u\mapsto\frac{\partial u}{\partial_{n_a}}=(a(Du),n)_{\RR^N}$$
with $n(\cdot)$ being the outward unit normal on $\partial\Omega$. This kind of normal derivative is dictated by the nonlinear Green's identity (see, for example, Gasinski and Papageorgiou [13], p. 210).

In this work, we assume that $\partial F(z,\cdot)$ exhibits sublinear growth as $x\rightarrow\pm\infty$ and in the special case of the $p$-Laplace differential operator, the multivalued quotient $\frac{\partial F(z,x)}{|x|^{p-2}x}$ asymptotically as $x\rightarrow\pm\infty$ stays below the principal eigenvalue $\hat{\lambda}_1$ of the differential operator $u\mapsto -\Delta_p u+\xi(z)|u|^{p-2}u,\ u\in W^{1,p}(\Omega)$ with  {the} Robin boundary condition. Here, $\Delta_p$ denotes the $p$-Laplace differential operator defined by
$\Delta_p u={\rm div}\,(|Du|^{p-2}Du)$ for all $u\in W^{1,p}(\Omega)$, $1<p<\infty$.

In fact, we allow for full interaction (resonance) with $\hat{\lambda}_1$. So, the problem is resonant. The resonance occurs from the left of $\hat{\lambda}_1$ and this makes the energy (Euler) functional of the problem coercive. This  {allows for} the use of global optimization techniques (direct method of the calculus of variations) in order to obtain solutions of constant sign. Near the origin, our conditions on $\partial F(z,\cdot)$ are such that, again in the special case of the $p$-Laplace differential operator, they imply that the quotient $\frac{\partial F(z,x)}{|x|^{p-2}x}$ stays above $\hat{\lambda}_1$. So, we can say that we have a ``crossing" multivalued reaction term, since the quotient $\frac{\partial F(z,x)}{|x|^{p-2}x}$ crosses at least $\hat{\lambda}_1$ as we move from $x=0$ to $x=\pm\infty$.

\section{Mathematical Background}\label{sec2}

Let $X$ be a Banach space and $X^*$ its topological dual. By $\langle\cdot,\cdot\rangle$ we denote the duality brackets for the pair $(X^*,X)$. We say that $\varphi:X\rightarrow\RR$ is locally Lipschitz, if for every $x\in X$ we can find a neighbourhood $U(x)$ of $x$ and a constant $k(x)>0$ such that
\begin{equation}\label{eq2}
	|\varphi(u)-\varphi(v)|\leq k(x)||u-v||\ \mbox{for all}\ u,v\in U(x).
\end{equation}

If (\ref{eq2}) is satisfied for all $u,v\in X$ and with $k(x)=k>0$ independent of $x\in X$, then we have the usual Lipschitz continuous function. Note that, if $\varphi:X\rightarrow\RR$ is Lipschitz continuous on every bounded set in $X$, then $\varphi$ is locally Lipschitz. Moreover, if $X$ is finite dimensional, then the converse is also true. Finally, if $\varphi:X\rightarrow\RR$ is continuous, convex or if $\varphi\in C^1(X,\RR)$, then $\varphi$ is locally Lipschitz.

Given a locally Lipschitz function, the ``generalized directional derivative" of $\varphi$ at $x\in X$ in the direction $h\in X$, denoted by $\varphi^0(x;h)$, is defined by
$$\varphi^0(x;h)=\limsup\limits_{\substack{x'\rightarrow x \\\lambda\rightarrow 0^+}}\frac{\varphi(x'+\lambda h)-\varphi(x')}{\lambda}\,.$$

It is easy to see that
\begin{itemize}
	\item [(i)] $h\mapsto\varphi^0(x;h)$ is sublinear and Lipschitz continuous;
	\item [(ii)] $(x,h)\mapsto\varphi^0(x;h)$ is upper semicontinuous on $X\times X$;
	\item [(iii)] $\varphi^0(x;-h)=(-\varphi)^0(x;h)$ for all $x,h\in X$.
\end{itemize}

So, $\varphi^0(x;\cdot)$ is the support function of a nonempty, convex and $w^*$-compact set defined by
$$\partial\varphi(x)=\{ x^*\in X^*:\langle x^*,h \rangle \leq \varphi^0(x;h)\ \mbox{for all}\ h\in X \}.$$

The multifunction $x\mapsto\partial\varphi(x)$ is known as the generalized or Clarke subdifferential of $\varphi$. If $\varphi\in C^1(X,\RR)$, then as we  {have} already mentioned, $\varphi$ is locally Lipschitz and
$\partial\varphi(x)=\{ \varphi'(x) \}.$
Similarly, if $\varphi:X\rightarrow\RR$ is continuous convex, then $\varphi$ is locally Lipschitz and the generalized subdifferential coincides with the subdifferential in the sense of convex analysis defined by
$\partial_{c}\varphi(x)=\{ x^*\in X^*:\langle x^*,h\rangle \leq \varphi(x+h)-\varphi(x)\ \mbox{for all}\ h\in X \}.$

For locally Lipschitz functions $\varphi,\psi:X\rightarrow\RR$ and $\lambda\in\RR$ we have
\begin{itemize}
	\item [(i)] $\partial(\varphi+\psi)(x)\subseteq\partial\varphi(x)+\partial\psi(x)$ for all $x\in X$ (equality holds if one of them is a singleton);
	\item [(ii)] $\partial(\lambda\varphi)(x)=\lambda\partial\varphi(x)$ for all $x\in X$.
\end{itemize}

The multifunction $\partial\varphi:X\rightarrow 2^{X^*}\setminus\emptyset$ is upper semicontinuous (usc for short) from $X$,  {equipped} with the norm topology into $X^*$, furnished with the $w^*$-topology. This implies that ${\rm Gr}\,\partial\varphi=\{ (x,x^*)\in X\times X^*: x^*\in \partial\varphi(x)\}$ is closed in $X\times X^*_{w^*}$. For a complete presentation of the subdifferential theory for locally Lipschitz functions, we refer to Clarke [14].

Using the Clarke subdifferential theory, we can have a nonsmooth critical point theory extending the classical theory for $C^1$-functions. So, let $\varphi:X\rightarrow\RR$ be a locally Lipschitz function. We say that $x\in X$ is a ``critical point" of $\varphi$, if $0\in \partial\varphi(x)$. By $K_\varphi$ we denote the set of critical points of $\varphi$. Let
\begin{equation}\label{eq3}
	m_\varphi(x)=\inf\left[||x^*||_*:x^*\in \partial\varphi(x)\right].
\end{equation}

We say that $\varphi$ satisfies the ``nonsmooth  {PS}-condition", if the following property holds.
\begin{center}
``Every sequence $\{ x_n \}_{n\geq 1}\subseteq X$ such that\\
$\{\varphi(x_n) \}_{n\geq 1}\subseteq\RR$\ \mbox{is bounded}\
 \mbox{and} $m_\varphi(x_n)\rightarrow 0$\ \mbox{as}\ $n\rightarrow\infty$,\\
admits a strongly convergent subsequence".
\end{center}

Evidently, this notion extends the classical $C$-condition for $C^1$-functions.

Using this compactness-type condition on the functional $\varphi$, one can prove a deformation theorem from which follows the nonsmooth minimax theory of the critical points of $\varphi$. A basic result in that theory is the so-called ``mountain pass theorem" (see Chang [15], Gasinski and Papageorgiou [16], R\u adulescu [17]).

\begin{theorem}\label{th1}
	Let $\varphi:X\rightarrow\RR$ be a locally Lipschitz function which satisfies the nonsmooth C-condition. Assume that there exist $u_0,u_1\in X$ and $r>0$ with $ ||u_1-u_0||>r$,
	$\max\{\varphi(u_0),\varphi(u_1)\}<\inf[\varphi(u):||u-u_0||=r]=m_r$,
and $c=\inf\limits_{\gamma\in\Gamma}\max\limits_{0\leq t\leq1}\varphi(\gamma(t))$ with $\Gamma=\{\gamma\in C([0,1],X):\gamma(0)=u_0,\gamma(1)=u_1\}$. Then $c\geq m_r$ and $c$ is a critical value of $\varphi$ (that is, we can find $u\in K_\varphi$ such that $\varphi(u)=c$).
\end{theorem}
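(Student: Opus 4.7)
The plan is to prove the two conclusions in sequence: first the geometric inequality $c \geq m_r$, then the harder claim that $c$ is a critical value. The first is an immediate topological observation: for any $\gamma \in \Gamma$, the continuous map $t \mapsto ||\gamma(t) - u_0||$ runs from $0$ to $||u_1 - u_0|| > r$, so by the intermediate value theorem it hits the value $r$ at some $t_* \in (0,1)$; hence $\max_t \varphi(\gamma(t)) \geq \varphi(\gamma(t_*)) \geq m_r$, and taking the infimum over $\Gamma$ gives $c \geq m_r > \max\{\varphi(u_0), \varphi(u_1)\}$.

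For the critical value claim, I would argue by contradiction, assuming $c \notin \varphi(K_\varphi)$. This hypothesis together with the nonsmooth C-condition yields a quantitative non-degeneracy near the level $c$: there exist $\varepsilon_0, \delta > 0$ such that $m_\varphi(x) \geq \delta$ whenever $|\varphi(x) - c| \leq 2\varepsilon_0$. If this failed, a violating sequence $\{x_n\}$ would satisfy $\varphi(x_n) \to c$ and $m_\varphi(x_n) \to 0$; the C-condition would extract a convergent subsequence $x_n \to x_*$, while the $w^*$-closedness of the graph of $\partial\varphi$ and the resulting lower semicontinuity of $m_\varphi$ would force $0 \in \partial\varphi(x_*)$ with $\varphi(x_*) = c$, contradicting the assumption.

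On the open set $U = \{x : |\varphi(x) - c| < 2\varepsilon_0\}$ I would next build a locally Lipschitz pseudogradient field $v: U \to X$ with $||v(x)|| \leq 1$ and $\langle x^*, v(x)\rangle \geq \delta/2$ for every $x^* \in \partial\varphi(x)$. Pointwise, since $\partial\varphi(x) \subset X^*$ is nonempty, convex, $w^*$-compact and bounded away from $0$ in norm, a separation argument produces a unit $v_x \in X$ with $\langle x^*, v_x \rangle > \delta/2$ for all $x^* \in \partial\varphi(x)$; upper semicontinuity of $\partial\varphi$ propagates this to a neighborhood of $x$, and a locally finite partition of unity patches the local choices into a single locally Lipschitz $v$ (convexity of $\partial\varphi(y)$ preserves the lower estimate after convex combinations). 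I would then integrate the truncated flow $\dot\eta(t,x) = -\chi(\eta)\, v(\eta)$, where $\chi$ is a Lipschitz cutoff equal to $1$ on $\{|\varphi - c| \leq \varepsilon_0\}$ and vanishing outside $U$. Along this flow $\varphi$ is nonincreasing, with $\frac{d}{dt} \varphi(\eta(t,x)) \leq -(\delta/2)\chi(\eta)$, so for suitably small $\varepsilon \in (0, \varepsilon_0)$ the map $\eta(1, \cdot)$ sends $\{\varphi \leq c + \varepsilon\}$ into $\{\varphi \leq c - \varepsilon\}$. Shrinking $\varepsilon$ further so that $\varphi(u_j) < c - \varepsilon$ for $j=0,1$ keeps $\chi$ inactive near the endpoints, so they stay fixed; picking $\gamma \in \Gamma$ with $\max_t \varphi(\gamma(t)) < c + \varepsilon$ and setting $\tilde\gamma = \eta(1, \gamma(\cdot))$ produces a competitor in $\Gamma$ with $\max_t \varphi(\tilde\gamma(t)) \leq c - \varepsilon$, contradicting the definition of $c$.

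The hardest step is the pseudogradient construction: in the smooth case one simply selects $\varphi'(x)$ and normalizes, but here $\partial\varphi(x)$ is a whole $w^*$-compact convex subset of the dual, and producing a single primal-space vector $v(x)$ that pairs positively and uniformly with every $x^* \in \partial\varphi(x)$ while retaining local Lipschitz regularity in $x$ is where the technical weight of the nonsmooth theory lies.
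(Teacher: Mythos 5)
The paper does not prove Theorem \ref{th1} at all: it is quoted from Chang [15] and Gasinski--Papageorgiou [16], and your argument is exactly the pseudogradient--deformation proof given in those sources, so in substance you follow the same route and your outline is sound. The intermediate value argument for $c\geq m_r$, the separation of the $w^*$-compact convex set $\partial\varphi(x)$ from a closed dual ball of radius $\delta/2$ by an element of $X$ (equivalently, producing $v$ with $\varphi^0(x;-v)<-\delta/2$, which is the cleanest bookkeeping), the locally Lipschitz partition-of-unity patching (which needs only linearity of the pairing $\langle x^*,\cdot\rangle$, not convexity of $\partial\varphi(y)$ as you suggest), and the truncated flow with the a.e.\ estimate $\frac{d}{dt}\varphi(\eta(t))\leq\varphi^0(\eta(t);\dot\eta(t))$ all go through. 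Two points deserve care. First, your uniform nondegeneracy step ($m_\varphi(x)\geq\delta$ on the strip $|\varphi(x)-c|\leq2\varepsilon_0$) uses the compactness condition in the form the paper actually defines it ($\varphi(x_n)$ bounded and $m_\varphi(x_n)\rightarrow0$ imply a convergent subsequence); if the hypothesis is instead the genuine Cerami condition, i.e.\ $(1+\|x_n\|)m_\varphi(x_n)\rightarrow0$, then this deduction fails and one only obtains $(1+\|x\|)m_\varphi(x)\geq\delta$ on the strip, so the vector field must carry the weight $1+\|x\|$ (the flow is still globally defined thanks to the at most linear growth); this is the standard modification and should at least be acknowledged, since the theorem is stated with the C-condition. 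Second, to keep the endpoints fixed it does not suffice to shrink $\varepsilon$: you must shrink the cutoff scale $\varepsilon_0$ so that $2\varepsilon_0<c-\max\{\varphi(u_0),\varphi(u_1)\}$ (possible because this gap is positive, by $c\geq m_r$), which places $u_0,u_1$ outside the set $U$ where the field vanishes, so they are stationary for the flow. With these two adjustments your proof is complete and coincides with the cited one.
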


In the analysis of problem (\ref{eq1}) we will use the following spaces:
\begin{itemize}
	\item [-] the Sobolev space $W^{1,p}(\Omega)$;
	\item [-] the ordered Banach space $C^1(\overline{\Omega})$;
	\item [-] the boundary Lebesgue space $L^p(\partial\Omega)$.
\end{itemize}

In what follows, we denote by $||\cdot||$  the norm of the Sobolev space $W^{1,p}(\Omega)$, defined by
$||u||=\left[ ||u||^p_p+||Du||^p_p \right]^{^1/p}$ for all $u\in W^{1,p}(\Omega).$
The positive (order) cone for the ordered Banach space $C^1(\overline{\Omega})$ is given by
$C_+=\{ u\in C^1(\overline{\Omega}):u(z)\geq0\ \mbox{for all}\ z\in\overline{\Omega} \}.$
This cone has a nonempty interior, given by
$D_+=\{ u\in C_+:u(z)>0\ \mbox{for all}\ z\in\overline{\Omega} \}.$

On $\partial\Omega$ we consider the $(N-1)$-dimensional Hausdorff (surface) measure $\sigma(\cdot)$. Using this measure, we can define in the usual way the Lebesgue spaces $L^q(\partial\Omega)$, $1\leq q\leq\infty$. The theory of Sobolev spaces says that there exists a unique continuous linear map $\gamma_0:W^{1,p}(\Omega)\rightarrow L^p(\partial\Omega)$, known as the ``trace map", such that
$\gamma_0(u)=u|_{\partial\Omega}$ for all $u\in W^{1,p}(\Omega)\cap C(\overline{\Omega}).$
In this way we extend the notion of boundary values to all Sobolev functions. The trace map is compact into $L^q(\partial\Omega)$ for all $q\in\left[1,\frac{Np-p}{N-p}\right)$ if $N\geq3$, and into $L^q(\partial\Omega)$ for all $q\in[1,\infty)$ if $N=1,2$. In the sequel, for the sake of notational simplicity, we drop the use of the map $\gamma_0$. All restrictions of Sobolev functions on $\partial\Omega$ are understood in the sense of traces. Finally, we mention that the trace map is not surjective on $L^p(\partial\Omega)$. We have
$${\rm im}\, \gamma_0=W^{\frac{1}{p'},p}(\partial\Omega)\ \mbox{with}\ \frac{1}{p}+\frac{1}{p'}=1.$$
Moreover, we have
${\rm ker}\,\gamma_0=W^{1,p}_0(\Omega).$

Let $d\in C^1(0,\infty)$ with $d(t)>0$ for all $t>0$ and assume that there exist $c_1,\ c_2>0$ such that
\begin{equation}\label{eq4}
	0<\hat{c}\leq\frac{td'(t)}{d(t)}\leq c_0,\ c_1t^{p-1}\leq d(t)\leq c_2(1+t^{p-1})\quad\mbox{for all $t>0$}.
\end{equation}

Our hypotheses on the map $a(\cdot)$, involved in the definition of the differential operator in problem (\ref{eq1}), are the following:

\smallskip
$H(a):$ $a(y)=a_0(|y|)y$ for all $y\in \RR^N$ with $a_0(t)>0$ for all $t>0$ and
\begin{itemize}
	\item [(i)] $a_0\in C^1(0,\infty),t\mapsto t a_0(t)$ is strictly increasing on $(0,+\infty),ta_0(t)\rightarrow 0^+$ as $t\rightarrow 0^+$ and
	$$\lim\limits_{t\rightarrow 0^+}\frac{ta'_0(t)}{a_0(t)}>-1;$$
	\item [(ii)] $|\nabla a(y)|\leq c_3\frac{d(y)}{|y|}$ for all $y\in \RR^N\backslash\{0\}$ and some $c_3>0$;
	\item [(iii)] $(\nabla a(y)h,h)_{\RR^N}\geq \frac{d(|y|)}{|y|}|h|^2$ for all $y\in \RR^N\backslash \{0\}$, $h\in \RR^N$;
	\item [(iv)] if $G_0(t)=\int^t_0 sa_0(s)ds$ (for $t>0$), then there exists $q\in(1,p]$ such that
	\begin{eqnarray*}
		&& t\mapsto G_0(t^{1/q})\ \mbox{is convex on}\ (0,\infty);\\
		&& \lim\limits_{t\rightarrow 0^+}\frac{qG_0(t)}{t^q}=\tilde{c}>0.
	\end{eqnarray*}
\end{itemize}
\begin{remark}
	Hypotheses $H(a)(i),(ii),(iii)$ are designed so that we can use the nonlinear regularity theory of Lieberman [18] and the nonlinear maximum principle of Pucci and Serrin [19], pp. 111, 120. Hypothesis $H(a)(iv)$ serves the particular needs of our problem but it is very mild and it is satisfied in most cases of interest, as the examples which follow illustrate.
\end{remark}

From hypotheses $H(a)$ it follows that $t\mapsto G_0(t)$ is strictly convex and strictly increasing. We set $G(y)=G_0(|y|)$ for all $y\in \RR^N$. Evidently, $G(\cdot)$ is convex and $G(0)=0$. We have
$$\nabla G(y)=G'_0(|y|)\frac{y}{|y|}=a_0(|y|)y=a(y)\ \mbox{for all}\ y\in \RR^N\backslash\{0\},\ \nabla G(0)=0.$$

Therefore $G(\cdot)$ is the primitive of $a(\cdot)$. The convexity of $G(\cdot)$ and since $G(0)=0$, imply that
\begin{equation}\label{eq5}
	G(y)\leq(a(y),y)_{\RR^N}\ \mbox{for all}\ y\in\RR^N.
\end{equation}

Hypotheses $H(a)(i),(ii),(iii)$ and (\ref{eq4}), (\ref{eq5}) lead to the following lemma which summarizes the main properties of the map $a(\cdot)$.
\begin{lemma}\label{lem2}
	If hypotheses $H(a)(i),(ii),(iii)$ hold, then
	\begin{itemize}
		\item [$(a)$] the map $y\mapsto a(y)$ is continuous and strictly monotone (hence maximal monotone, too);
		\item [$(b)$] $|a(y)|\leq c_4(1+|y|^{p-1})$ for all $y\in\RR^N$ and some $c_4>0$;
		\item [$(c)$] $(a(y),y)_{\RR^N}\geq \frac{c_1}{p-1}|y|^p$ for all $y\in \RR^N$.
	\end{itemize}
\end{lemma}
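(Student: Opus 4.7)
My plan is to prove each of (a), (b), (c) by reducing to one-dimensional calculus along rays or line segments in $\RR^N$, and then to invoke the pointwise bounds on $\nabla a$ provided by $H(a)(ii),(iii)$ together with the growth and decay information on $d$ encoded in (\ref{eq4}). For (a), continuity of $a$ on $\RR^N\setminus\{0\}$ is immediate from $a_0\in C^1(0,\infty)$, while continuity at the origin follows from $ta_0(t)\to 0^+$, which forces $a(y)=a_0(|y|)y\to 0=a(0)$ as $|y|\to 0^+$. For strict monotonicity, given distinct $y,z\in\RR^N$, I would set $\phi(t)=(a(y+t(z-y)),z-y)_{\RR^N}$ on $[0,1]$; differentiating and applying $H(a)(iii)$ gives $\phi'(t)\geq \frac{d(|y_t|)}{|y_t|}|z-y|^2>0$, where $y_t:=y+t(z-y)$, for all but at most one value of $t$. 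Integrating yields $(a(z)-a(y),z-y)_{\RR^N}=\phi(1)-\phi(0)>0$. Maximality is then automatic, since a continuous, everywhere-defined, single-valued monotone operator on a finite-dimensional Hilbert space is maximal monotone.

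For (b), since $a(0)=0$, I would write $a(y)=\int_0^1 \nabla a(ty)y\,dt$ for $y\neq 0$ and apply $H(a)(ii)$ to obtain $|a(y)|\leq c_3\int_0^1\frac{d(t|y|)}{t}\,dt=c_3\int_0^{|y|}\frac{d(s)}{s}\,ds$ after the substitution $s=t|y|$. The upper estimate $d(s)\leq c_2(1+s^{p-1})$ controls the integrand away from the origin; near $s=0$ the logarithmic-derivative hypothesis $\hat c\leq sd'(s)/d(s)$ yields $d(s)\leq C s^{\hat c}$ via a Gronwall-type estimate, which guarantees integrability. Splitting the integral at $s=1$ (when $|y|>1$) and estimating each piece then delivers $|a(y)|\leq c_4(1+|y|^{p-1})$.

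For (c), I would parametrize along a ray by setting $\psi(t)=(a(ty),y)_{\RR^N}$, so that $\psi(0)=0$ and $\psi(1)=(a(y),y)_{\RR^N}$; then $\psi'(t)=(\nabla a(ty)y,y)_{\RR^N}\geq \frac{d(t|y|)}{t|y|}|y|^2\geq c_1 t^{p-2}|y|^p$ by $H(a)(iii)$ combined with the lower bound $d(\tau)\geq c_1\tau^{p-1}$ from (\ref{eq4}). Integrating on $[0,1]$ yields $(a(y),y)_{\RR^N}\geq \frac{c_1}{p-1}|y|^p$. I expect the only genuine technical delicacy to be the integrability of $d(s)/s$ near $s=0$ in part (b), which requires extracting the polynomial decay rate of $d$ at the origin from the logarithmic-derivative condition; the remainder of the argument is essentially disciplined bookkeeping with the fundamental theorem of calculus.
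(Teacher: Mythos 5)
The paper states Lemma~\ref{lem2} without proof (it is quoted as a known summary of properties of $a(\cdot)$ from the cited earlier works), so there is no in-paper argument to compare against; your reconstruction is correct and is essentially the standard one used in those references: continuity from $H(a)(i)$, strict monotonicity by integrating $H(a)(iii)$ along segments (with Minty's theorem giving maximality), and the growth bounds (b), (c) by the fundamental theorem of calculus along rays combined with $H(a)(ii),(iii)$ and the bounds on $d$ in (\ref{eq4}). The only points needing care are the ones you already flag and handle: integrability of $d(s)/s$ near $s=0$, which follows from $\hat c\leq s d'(s)/d(s)$ giving $d(s)\leq d(1)s^{\hat c}$ for $s\leq 1$, and the possible single point where the segment in the monotonicity argument passes through the origin, which is harmless since $\phi$ is continuous there and strictly increasing on either side.
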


This lemma and (\ref{eq5}), give the following growth estimates for the primitive $G(\cdot)$.

\begin{corollary}\label{cor3}
	If hypotheses $H(a)(i),(ii),(iii)$ hold, then $\frac{c_1}{p(p-1)}|y|^p\leq G(y)\leq c_5(1+|y|^p)\ \mbox{for all}\ y\in \RR^N\ \mbox{and some}\ c_5>0.$
\end{corollary}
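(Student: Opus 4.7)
The plan is to reduce both bounds to scalar inequalities for $G_0$ on $(0,\infty)$ via the identity $G(y)=G_0(|y|)$, and then extract everything from the estimates already established in Lemma \ref{lem2}. The key observation throughout is that the definition $G_0(t)=\int_0^t s a_0(s)\,ds$ gives $G_0'(t)=t a_0(t)$, while $a(y)=a_0(|y|)y$ yields $(a(y),y)_{\RR^N}=a_0(|y|)|y|^2$ and $|a(y)|=a_0(|y|)|y|$.

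For the lower bound I start from Lemma \ref{lem2}(c). Substituting $(a(y),y)_{\RR^N}=a_0(|y|)|y|^2$ and setting $t=|y|$ converts this into the scalar inequality $t a_0(t)\ge \tfrac{c_1}{p-1}t^{p-1}$ for all $t>0$. Since $G_0(0)=0$ and $G_0'(s)=sa_0(s)$, integrating from $0$ to $|y|$ yields
\[
G(y)=G_0(|y|)=\int_0^{|y|}s\,a_0(s)\,ds\;\ge\;\frac{c_1}{p-1}\int_0^{|y|}s^{p-1}\,ds\;=\;\frac{c_1}{p(p-1)}|y|^p,
\]
which is exactly the asserted lower bound.

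For the upper bound, the quickest route is to combine the convexity inequality (\ref{eq5}), namely $G(y)\le (a(y),y)_{\RR^N}$, with the growth bound from Lemma \ref{lem2}(b). Since $(a(y),y)_{\RR^N}=|a(y)|\,|y|$ and $|a(y)|\le c_4(1+|y|^{p-1})$, we obtain
\[
G(y)\;\le\;|a(y)|\,|y|\;\le\;c_4(1+|y|^{p-1})|y|\;=\;c_4(|y|+|y|^p)\;\le\;c_5(1+|y|^p),
\]
taking $c_5=2c_4$ and using the elementary fact $|y|\le 1+|y|^p$ (trivial for $|y|\le 1$, and valid for $|y|>1$ since $p>1$).

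No substantial obstacle is anticipated; the proof is really a bookkeeping exercise that transcribes the vector-valued estimates of Lemma \ref{lem2} into scalar form. The only point requiring a little care is recognizing that (\ref{eq5}) supplies the upper bound on $G$ (because $G$ is convex with $G(0)=0$), while the lower bound must come from integrating the growth of $a_0$ provided by Lemma \ref{lem2}(c) rather than from (\ref{eq5}) itself.
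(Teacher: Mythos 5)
Your proof is correct and follows essentially the same route the paper intends: the lower bound comes from integrating the scalar form of Lemma \ref{lem2}(c) (i.e.\ $ta_0(t)\ge\frac{c_1}{p-1}t^{p-1}$) in $G_0(t)=\int_0^t sa_0(s)\,ds$, and the upper bound from combining (\ref{eq5}) with the growth estimate of Lemma \ref{lem2}(b). Nothing further is needed.
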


{\it Examples}. The following maps satisfy hypotheses $H(a)$ above (see [8]).
\begin{itemize}
	\item [(a)] $a(y)=|y|^{p-2}$ with $1<p<\infty$.
		Then the differential operator is the $p$-Laplacian
		$\Delta_p u = {\rm div}\,(|Du|^{p-2}Du)$ for all $u\in W^{1,p}(\Omega).$
	\item [(b)] $a(y)=|y|^{p-2}y+|y|^{q-2}y$ with $1<q<p<\infty$.
		Then the differential operator is the $(p,q)$-Laplacian
		$\Delta_p u+\Delta_q u$ for all $u\in W^{1,p}(\Omega).$

	Such operators arise in problems of mathematical physics, see Cherfils and Ilyasov [20]. Recently, some existence and multiplicity results for such equations have been established. All these problems are with no potential (that is, $\xi\equiv0$) and with  {a} smooth primitive $F(z,\cdot)$. We mention the works of Aizicovici, Papageorgiou and Staicu [21], Cingolani and Degiovanni [22], Papageorgiou and R\u adulescu [23],  Papageorgiou, R\u adulescu and Repov\v s [24], Sun [25], Sun, Zhang and Su [26].
	\item [(c)] $a(y)=(1+|y|^2)^{\frac{p-2}{2}}y$ with $1<p<\infty$.
		Then the differential operator is the generalized $p$-mean curvature operator
		${\rm div}\,( \left(1+|Du|^2\right)^{\frac{p-2}{2}} Du )$ for all $u\in W^{1,p}(\Omega).$
	\item [(d)] $a(y)=|y|^{p-2}y+\frac{|y|^{p-2}y}{1+|y|^p}$ with $1<p<\infty$.
\end{itemize}

The hypotheses on the potential function $\xi(\cdot)$ and the boundary coefficient are the following:
\begin{itemize}
	\item [$H(\xi):$] $\xi\in L^\infty(\Omega).$
	\item [$H(\beta):$] $\beta\in C^{0,\alpha}(\partial\Omega)$ with $\alpha\in(0,1)$ and $\beta(z)\geq0$ for all $z\in\partial\Omega.$
\end{itemize}
\begin{remark}
	When $\beta\equiv0$, we have the usual Neumann problem.
\end{remark}
	
	Suppose that $F_0:\Omega\times\RR\rightarrow\RR$ is a locally Lipschitz integrand satisfying
	$$|F_0(z,x)|\leq a_0(z)(1+|x|^p)\ \mbox{for almost all}\ z\in\Omega,\ \mbox{for all}\ x\in\RR,\ \mbox{with}\ a_0\in L^\infty(\Omega).$$
	Consider the functional $\varphi_0:W^{1,p}(\Omega)\rightarrow\RR$ defined by
	$$\varphi_0(u)=\int_\Omega G(Du)dz+\frac{1}{p}\int_\Omega\xi(z)|u|^pdz+\frac{1}{p}\int_{\partial\Omega}\beta(z)|u|^pd\sigma-\int_\Omega F_0(z,u)dz\quad\mbox{for all $u\in W^{1,p}(\Omega)$}.$$

Then $\varphi_0$ is locally Lipschitz (see Clarke [14]). From Gasinski and Papageorgiou [27] (see also Papageorgiou and R\u adulescu [28] for the critical case), we have the following property.

\begin{proposition}\label{prop4}
	Let $u_0\in W^{1,p}(\Omega)$ be a local $C^1(\overline{\Omega})$-minimizer of $\varphi_0$, that is, there exists $\rho_0>0$ such that
	$$\varphi_0(u_0)\leq\varphi_0(u_0+h)\ \mbox{for all}\ h\in C^1(\overline{\Omega})\ \mbox{with}\ ||h||_{C^1(\overline{\Omega})}\leq\rho_0.$$
	Then $u_0\in C^{1,\alpha}(\overline{\Omega})$ with $0<\alpha<1$ and $u_0$ is also a local $W^{1,p}(\Omega)$-minimizer of $\varphi_0$, that is, there exists $\rho_1>0$ such that
	$$\varphi_0(u_0)\leq\varphi_0(u_0+h)\ \mbox{for all}\ h\in W^{1,p}(\Omega)\ \mbox{with}\ ||h||\leq\rho_1.$$
\end{proposition}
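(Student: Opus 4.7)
The plan splits into the regularity claim $u_0\in C^{1,\alpha}(\overline{\Omega})$ and the $W^{1,p}(\Omega)$-local minimality. For the first, the generalized Fermat rule applied to the $C^1$-local minimizer $u_0$ gives $0\in\partial\varphi_0(u_0)$; hence $u_0$ weakly solves a Robin problem of the form $-\mathrm{div}\,a(Du_0)+\xi(z)|u_0|^{p-2}u_0=h(z)$ in $\Omega$ with $\frac{\partial u_0}{\partial n_a}+\beta(z)|u_0|^{p-2}u_0=0$ on $\partial\Omega$, for some measurable selection $h(z)\in\partial F_0(z,u_0(z))$ (Yankov--von Neumann--Aumann). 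A Moser iteration then yields $u_0\in L^\infty(\Omega)$, and Lieberman's nonlinear regularity theory [18], applicable under hypotheses $H(a)(i),(ii),(iii)$ and H\"older $\beta$, upgrades this to $u_0\in C^{1,\alpha}(\overline{\Omega})$.

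For the $W^{1,p}$-local minimality, I would argue by contradiction in the spirit of Br\'ezis--Nirenberg (extended by Garc\'{\i}a Azorero--Manfredi--Peral to the $p$-Laplacian), adapted to this nonsmooth nonhomogeneous Robin setting. Assume $u_0$ has $C^1$-local minimality radius $\rho_0$ but fails to be a local $W^{1,p}$-minimizer. Via the direct method---weak lower semicontinuity of $\varphi_0$ follows from the convexity of $y\mapsto G(y)$ (Corollary \ref{cor3}), the compactness of the embedding $W^{1,p}(\Omega)\hookrightarrow L^p(\Omega)$ and of the trace into $L^p(\partial\Omega)$, combined with dominated convergence under the $p$-growth of $F_0$---for each $n\ge 1$ I would minimize $\varphi_0$ over the weakly compact ball $\overline{B}_{1/n}(u_0)=\{v\in W^{1,p}(\Omega):\|v-u_0\|\le 1/n\}$. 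Since $u_0$ is not a $W^{1,p}$-local minimizer, the resulting minimizer $v_n$ satisfies $\varphi_0(v_n)<\varphi_0(u_0)$, with $v_n\to u_0$ in $W^{1,p}(\Omega)$. The nonsmooth Lagrange multiplier rule (Clarke [14]) then produces $\lambda_n\ge 0$ with $0\in\partial\varphi_0(v_n)+\lambda_n\,\partial\psi_n(v_n)$, where $\psi_n(v)=\|v-u_0\|^p$, leading to a Robin equation of the same form as above with a bounded Lagrange-correction term. The same Moser-plus-Lieberman scheme now delivers a uniform bound $\|v_n\|_{C^{1,\alpha}(\overline{\Omega})}\le M$, and compactness of $C^{1,\alpha}(\overline{\Omega})\hookrightarrow C^1(\overline{\Omega})$ forces $v_n\to u_0$ in $C^1(\overline{\Omega})$ along a subsequence (the limit being pinned down by the $W^{1,p}$-convergence). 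Hence $\|v_n-u_0\|_{C^1(\overline{\Omega})}\le\rho_0$ for $n$ large, and the local $C^1$-minimality gives $\varphi_0(u_0)\le\varphi_0(v_n)$, contradicting $\varphi_0(v_n)<\varphi_0(u_0)$.

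The main obstacle is the uniform regularity step. Only integral $p$-growth on $F_0$ is assumed, so pointwise control on the selections $h_n\in\partial F_0(\cdot,v_n)$ has to be bootstrapped through an $L^q$ Moser cascade up to $L^\infty$, while simultaneously checking that the multiplier contribution $\lambda_n\,\partial\psi_n(v_n)$ does not inflate the $L^\infty$ bounds as $n\to\infty$---an alternative is to replace the constrained minimization by Ekeland's variational principle, producing $v_n$ as approximate free critical points and sidestepping the multiplier altogether. This is the point where the locally Lipschitz, indefinite-potential, nonhomogeneous Robin framework genuinely deviates from the classical smooth Br\'ezis--Nirenberg setting.
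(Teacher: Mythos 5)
The paper does not prove Proposition 4 itself; it is quoted from Gasinski and Papageorgiou [27] (and Papageorgiou--R\u adulescu [28] for the critical case), and your outline --- nonsmooth Fermat rule plus Moser iteration and Lieberman regularity for $u_0$, then the Br\'ezis--Nirenberg-type contradiction via constrained minimization (or Ekeland) on small $W^{1,p}$-balls, uniform $C^{1,\alpha}$ bounds, and the compact embedding $C^{1,\alpha}(\overline{\Omega})\hookrightarrow C^1(\overline{\Omega})$ --- is essentially the argument carried out in those references. So the approach matches the (cited) proof; the uniform $L^\infty$--$C^{1,\alpha}$ estimate in the presence of the multiplier term, which you correctly identify as the crux, is exactly the step worked out in detail in [27].
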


	Let $A:W^{1,p}(\Omega)\rightarrow W^{1,p}(\Omega)^*$ be the nonlinear map defined by
	$$\langle A(u),h \rangle=\int_\Omega(a(Du),Dh)_{\RR^N}dz\ \mbox{for all}\ u,h\in W^{1,p}(\Omega).$$

From Gasinski and Papageorgiou [29] (Problem 2.192), we have:
\begin{proposition}\label{prop5}
	If hypotheses $H(a)$ hold, then $A:W^{1,p}(\Omega)\rightarrow W^{1,p}(\Omega)^*$ is continuous, monotone (hence maximal monotone, too) and of type $(S)_+$, that is,
	$$``u_n\xrightarrow{w}u\ \mbox{in}\ W^{1,p}(\Omega),\limsup\limits_{n\rightarrow\infty} \langle A(u_n),u_n-u\rangle \leq0\Rightarrow u_n\rightarrow u\ \mbox{in}\ W^{1,p}(\Omega)."$$
\end{proposition}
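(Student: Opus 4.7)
The plan is to verify each of the three assertions (continuity, maximal monotonicity, and the $(S)_+$ property) in turn, leveraging the pointwise properties of $a(\cdot)$ collected in Lemma \ref{lem2} together with standard functional-analytic facts on reflexive Banach spaces.

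For \emph{continuity}, I would argue that if $u_n\to u$ in $W^{1,p}(\Omega)$, then $Du_n\to Du$ in $L^p(\Omega,\RR^N)$ and, passing to a subsequence, $Du_n(z)\to Du(z)$ almost everywhere. By Lemma \ref{lem2}(a), $a(Du_n(z))\to a(Du(z))$ a.e., while Lemma \ref{lem2}(b) yields the domination $|a(Du_n)|^{p'}\leq c(1+|Du_n|^p)$, and Vitali's theorem (or Lebesgue's dominated convergence together with uniform integrability) gives $a(Du_n)\to a(Du)$ in $L^{p'}(\Omega,\RR^N)$. A Hölder estimate then forces $\|A(u_n)-A(u)\|_*\to 0$ on the full sequence via the standard Urysohn subsequence principle.

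For \emph{monotonicity}, the strict monotonicity of $a$ (Lemma \ref{lem2}(a)) gives the pointwise inequality $(a(y)-a(y'),y-y')_{\RR^N}\ge 0$, and integration over $\Omega$ yields $\langle A(u)-A(v),u-v\rangle\ge 0$. Since $A$ is everywhere defined, continuous, and monotone from the reflexive Banach space $W^{1,p}(\Omega)$ into its dual, a classical result (e.g. the demicontinuity criterion for maximality) ensures that $A$ is maximal monotone.

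The main obstacle, as expected, is the $(S)_+$ property. Suppose $u_n\xrightarrow{w}u$ in $W^{1,p}(\Omega)$ with $\limsup_{n\to\infty}\langle A(u_n),u_n-u\rangle\le 0$. Since $A$ is monotone, $\langle A(u_n),u_n-u\rangle\ge \langle A(u),u_n-u\rangle\to 0$, so in fact $\langle A(u_n),u_n-u\rangle\to 0$, and therefore
\[
\int_\Omega\bigl(a(Du_n)-a(Du),Du_n-Du\bigr)_{\RR^N}\,dz=\langle A(u_n)-A(u),u_n-u\rangle\longrightarrow 0.
\]
The integrands are nonnegative by monotonicity, hence convergent to $0$ in $L^1(\Omega)$. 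Extracting a subsequence, I would conclude $Du_n\to Du$ almost everywhere, exploiting the strict convexity of $G$ and the ellipticity bound $H(a)(\mathrm{iii})$ to rule out pointwise failure. Combining a.e.\ convergence of $Du_n$ with the compact embedding $W^{1,p}(\Omega)\hookrightarrow L^p(\Omega)$ (giving $u_n\to u$ in $L^p$), and using Lemma \ref{lem2}(c) together with weak lower semicontinuity to show $\|Du_n\|_p\to\|Du\|_p$, the uniform convexity of $L^p(\Omega,\RR^N)$ for $1<p<\infty$ promotes a.e.\ convergence plus norm convergence to strong convergence $Du_n\to Du$ in $L^p$, hence $u_n\to u$ in $W^{1,p}(\Omega)$. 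In the write-up this last step can of course be short-circuited by invoking Problem 2.192 in Gasinski--Papageorgiou [29], where the argument is carried out in detail for precisely this class of maps $a(\cdot)$.
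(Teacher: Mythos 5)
Your proposal is correct in substance, but note that the paper does not prove this proposition at all: it is quoted from Gasinski and Papageorgiou [29] (Problem 2.192), which is precisely the reference you invoke in your closing sentence, so your sketch supplies an argument where the paper supplies only a citation. The continuity part (a.e.\ convergence of $a(Du_n)$, the growth bound of Lemma \ref{lem2}(b), Vitali, H\"older, and the subsequence principle) and the monotonicity/maximality part (pointwise monotonicity integrated over $\Omega$, plus the classical fact that an everywhere defined, continuous, monotone operator from a reflexive space into its dual is maximal monotone) are the standard proofs. For $(S)_+$, your reduction to $\int_\Omega\bigl(a(Du_n)-a(Du),Du_n-Du\bigr)_{\RR^N}dz\rightarrow 0$ and the extraction of an a.e.-convergent subsequence of gradients (pointwise boundedness from the coercivity in Lemma \ref{lem2}(c), then strict monotonicity to identify the limit) is likewise the standard route. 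The one loose joint is the claim that Lemma \ref{lem2}(c) ``together with weak lower semicontinuity'' yields $\|Du_n\|_p\rightarrow\|Du\|_p$: weak lower semicontinuity gives only $\liminf_n\|Du_n\|_p\geq\|Du\|_p$, and the reverse inequality needs an extra ingredient --- for instance $a(Du_n)\xrightarrow{w}a(Du)$ in $L^{p'}(\Omega,\RR^N)$ (boundedness plus a.e.\ convergence), which gives $\int_\Omega(a(Du_n),Du_n)_{\RR^N}dz\rightarrow\int_\Omega(a(Du),Du)_{\RR^N}dz$, followed by a Vitali/uniform-integrability argument based on $(a(y),y)_{\RR^N}\geq\frac{c_1}{p-1}|y|^p$; for a nonhomogeneous $a$ the convergence of these energies does not by itself give convergence of the $L^p$-norms of the gradients. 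Since you explicitly allow this last step to be short-circuited by the very reference the paper cites, this is a matter of tightening the write-up rather than a gap in the approach.
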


We will also need some basic facts about the spectrum of the nonlinear operator $u\mapsto -\Delta_p u+\xi(z)|u|^{p-2}u$ with  {the} Robin boundary condition. So, we consider the following nonlinear eigenvalue problem:
\begin{equation}\label{eq6}
	\left\{ \begin{array}{ll}
		-\Delta_p u(z)+\xi(z)|u(z)|^{p-2}u(z)=\hat{\lambda}|u(z)|^{p-2}u(z)\ \mbox{in}\ \Omega, \\
	\displaystyle	\frac{\partial u}{\partial n_p}+\beta(z)|u|^{p-2}u=0\ \mbox{on}\ \partial\Omega,
	\end{array} \right\}
\end{equation}
where
 $\frac{\partial u}{\partial n_p}=|Du|^{p-2}(Du,n)_{\RR^N}$. We say that $\hat{\lambda}\in\RR$ is an eigenvalue if problem (\ref{eq6}) admits a nontrivial solution $\hat{u}\in W^{1,p}(\Omega)$, known as an eigenfunction corresponding to $\hat{\lambda}$.

From Mugnai and Papageorgiou [30] (Neumann problem) and Papageorgiou and R\u a\-du\-les\-cu [31] (Robin problem), we know that  {problem (\ref{eq6})} has a smallest eigenvalue $\hat{\lambda}_1(p,\xi,\beta)\in\RR$ which has the following properties:
\begin{itemize}
	\item [] $\hat{\lambda}_1(p,\xi,\beta)$ is isolated in the spectrum $\hat{\sigma}(p,\xi,\beta)$ of \eqref{eq6} (that is, we can find $\varepsilon>0$ such that $(\hat{\lambda}_1(p,\xi,\beta),\hat{\lambda}_1(p,\xi,\beta)+\varepsilon)\cap\hat{\sigma}(p,\xi,\beta)=\emptyset$);
	\item [] $\hat{\lambda}_1(p,\xi,\beta)$ is simple (that is, if $\hat{u},\hat{v}$ are two eigenfunctions corresponding to this eigenvalue, then $\hat{u}=\eta\hat{v}$ with $\eta\in\RR\backslash\{0\}$);
	\begin{equation}\label{eq7}
		 \hat{\lambda}_1(p,\xi,\beta)=\inf\left[ \frac{k_p(u)}{||u||^p_p}:u\in W^{1,p}(\Omega),u\neq0 \right],\hspace{5.1cm}
	\end{equation}
	with $k_p:W^{1,p}(\Omega)\rightarrow\RR$ being the $C^1$-functional defined by \\ $k_p(u)=||Du||^p_p+\int_\Omega\xi(z)|u|^pdz+\int_{\partial\Omega}\beta(z)|u|^pd\sigma .$
\end{itemize}

In (\ref{eq7}) the infimum is realized on the one-dimensional eigenspace, corresponding to the eigenvalue $\hat{\lambda}_1(p,\xi,\beta)$.  It follows from the above properties that the elements of this eigenspace do not change  {the} sign. By $\hat{u}_1(p,\xi,\beta)$ we denote the $L^p$-normalized (that is, $||\hat{u}_1(p,\xi,\beta)||_p=1$) positive eigenfunction. The nonlinear regularity theory of Lieberman [18] implies that $\hat{u}_1(p,\xi,\beta)\in C_+$. In fact, the nonlinear maximum principle (see, for example, Gasinski and Papageorgiou [13], p. 738) implies that $\hat{u}_1(p,\xi,\beta) \in D_+$. The eigenfunctions corresponding to eigenvalues $\hat{\lambda}\neq\hat{\lambda}_1(p,\xi,\beta)$ are nodal (that is, sign changing). The spectrum $\hat{\sigma}(p,\xi,\beta)$ is closed and $\hat{\lambda}_1(p,\xi,\beta)$ is isolated. So, the second eigenvalue $\hat{\lambda}_2(p,\xi,\beta)$ is well-defined by
$	\hat{\lambda}_2(p,\xi,\beta) = \min\{\hat{\lambda}\in\hat{\sigma}(p,\xi,\beta) : \hat{\lambda}>\hat{\lambda}_1(p,\xi,\beta)\}$.

Additional eigenvalues can be generated  {by} using the Ljusternik-Schnirelmann minimax scheme. In this way, we produce a strictly increasing sequence $\{\hat{\lambda}_k\}_{k\in\NN}$ of eigenvalues such that $\hat{\lambda}_k\rightarrow+\infty$. These are known as the ``LS-eigenvalues" of (\ref{eq6}) and we do not know if they exhaust $\hat{\sigma}(p,\xi,\beta)$. We know that they exhaust  {$\hat{\sigma}(p,\xi,\beta)$} if $p=2$ (linear eigenvalue problem) or if $N=1$ (ordinary differential equation). A variational characterisation of $\hat{\lambda}_2(p,\xi,\beta)$ can be obtained using the minimax expression of the Ljusternik-Schnirelmann minimax scheme. There is an alternative minimax characterization which  is more convenient for our purpose. So, we define
\begin{align*}
	& \partial B_1^{L^p} = \{u\in L^p(\Omega) : \|u\|_p=1\},\\
	& M = W^{1,p}(\Omega)\cap\partial B_1^{L^p},\\
	& \hat{\Gamma} = \{\hat{\gamma}\in C([-1,1],M) : \hat{\gamma}(-1) = -\hat{u}_1(p,\xi,\beta),\ \hat{\gamma}(1) = \hat{u}_1(p,\xi,\beta)\}.
\end{align*}

From [30] and [31],  we have the following alternative minimax characterization of $\hat{\lambda}_2(p,\xi,\beta)$.
\begin{proposition}\label{prop6}
	$\hat{\lambda}_2(p,\xi,\beta) = \inf\limits_{\hat{\gamma}\in\hat{\Gamma}} \max\limits_{-1\leqslant t\leqslant 1} k_p(\hat{\gamma}(t))$.
\end{proposition}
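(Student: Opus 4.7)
The plan is to prove the two inequalities $\tilde\lambda:=\inf_{\hat\gamma\in\hat\Gamma}\max_t k_p(\hat\gamma(t))\geq\hat\lambda_2(p,\xi,\beta)$ and $\tilde\lambda\leq\hat\lambda_2(p,\xi,\beta)$ separately, starting from the Ljusternik-Schnirelmann characterization $\hat\lambda_2(p,\xi,\beta)=\inf_{A\in\mathcal A_2}\max_A k_p$, where $\mathcal A_2$ is the family of symmetric compact subsets of $M$ of Krasnoselski genus at least two. For brevity, abbreviate $\hat u_1:=\hat u_1(p,\xi,\beta)$.

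For $\tilde\lambda\geq\hat\lambda_2(p,\xi,\beta)$, given $\hat\gamma\in\hat\Gamma$, I would form the symmetrized image $K=\hat\gamma([-1,1])\cup(-\hat\gamma([-1,1]))\subset M$, which is compact and symmetric. Connectedness of $K$ follows since the path-connected sheet $\hat\gamma([-1,1])$ and its antipode both contain the pair $\pm\hat u_1$. Its Krasnoselski genus is at least two: an odd continuous map $f\colon K\to\RR\setminus\{0\}$ would force $f(\hat u_1)$ and $f(-\hat u_1)$ to have opposite signs, whereas $f(K)$ is connected in $\RR\setminus\{0\}$; a contradiction. Hence $\max_K k_p\geq\hat\lambda_2(p,\xi,\beta)$, and since $k_p$ is even, $\max_K k_p=\max_t k_p(\hat\gamma(t))$, which proves the inequality.

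For $\tilde\lambda\leq\hat\lambda_2(p,\xi,\beta)$, the strategy is to exhibit, for every $\varepsilon>0$, a path $\hat\gamma_\varepsilon\in\hat\Gamma$ with $\max_t k_p(\hat\gamma_\varepsilon(t))\leq\hat\lambda_2(p,\xi,\beta)+\varepsilon$. Let $\hat u_2$ be an eigenfunction of $\hat\lambda_2(p,\xi,\beta)$ normalized so that $\|\hat u_2\|_p=1$; it is necessarily nodal, so write $\hat u_2=\hat u_2^+-\hat u_2^-$ with disjoint supports. Setting $v_\pm:=\pm\hat u_2^\pm/\|\hat u_2^\pm\|_p\in M$, I would concatenate four arcs in $M$: from $-\hat u_1$ to $v_-$, from $v_-$ to $\hat u_2$, from $\hat u_2$ to $v_+$, and from $v_+$ to $\hat u_1$, each parameterized as $s\mapsto (sa+(1-s)b)/\|sa+(1-s)b\|_p$ between its endpoints $a,b$ (feasibility follows because consecutive waypoints have a common sign on a set of positive measure, hence are never antipodal). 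A convexity argument based on $G(\cdot)$ (cf.\ Corollary~\ref{cor3}) together with the disjoint-support property of $\hat u_2^\pm$ bounds $k_p$ along each arc by the larger of its two endpoint values, which is either $\hat\lambda_1(p,\xi,\beta)$ or $\hat\lambda_2(p,\xi,\beta)$.

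The main obstacle is precisely this energy estimate along the renormalized segments. The delicate case is the pair of arcs passing through $\hat u_2$, where one exploits the disjoint-support identity to express $k_p$ of a convex combination as an additive sum over the $\pm$ components that evaluates to exactly $\hat\lambda_2(p,\xi,\beta)$ times the appropriate $L^p$-norm to the $p$; the $\varepsilon$-slack accommodates perturbing the waypoints to avoid degeneracies in the outer arcs joining $\pm\hat u_1$ to $v_\pm$. This construction is the Robin/weighted adaptation, carried out in references [30] and [31], of the classical Cuesta-de Figueiredo-Gossez argument for the standard $p$-Laplacian.
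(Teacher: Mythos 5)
Your lower-bound half is fine in spirit: symmetrizing the image of a path $\hat\gamma\in\hat\Gamma$, noting it is compact, symmetric, connected (both sheets contain $\pm\hat u_1(p,\xi,\beta)$) and hence of Krasnoselskii genus at least two, and using evenness of $k_p$, does give $\max_t k_p(\hat\gamma(t))\ge\inf_{A\in\mathcal A_2}\max_A k_p$. Note, however, that identifying this genus-based minimax with $\hat\lambda_2(p,\xi,\beta)$ (defined in the paper as the smallest spectral point above $\hat\lambda_1(p,\xi,\beta)$) is itself a nontrivial Anane--Tsouli-type theorem; the paper does not prove it but simply quotes Proposition \ref{prop6} from [30] and [31], so your argument is not so much a proof as a reduction to an equally deep statement.

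The genuine gap is in the upper bound. The middle arcs are indeed the easy part: for $w=a\hat u_2^+-b\hat u_2^-$ with $a,b\ge0$, disjoint supports and testing the eigenvalue equation with $\hat u_2^\pm$ give $k_p(w)=\hat\lambda_2(p,\xi,\beta)\,\|w\|_p^p$, so the Rayleigh quotient is constant there. But your claim that ``a convexity argument based on $G$'' bounds $k_p$ along the outer normalized segments $s\mapsto(s\hat u_1+(1-s)v_\pm)/\|s\hat u_1+(1-s)v_\pm\|_p$ by the larger endpoint value is unjustified, and this is precisely the crux. Convexity controls only the numerator: $k_p(s\hat u_1+(1-s)v_+)\le s\,k_p(\hat u_1)+(1-s)k_p(v_+)$ (and even this fails as stated when $\xi$ is indefinite), while the normalization divides by $\|s\hat u_1+(1-s)v_+\|_p^p$, which for nonnegative, non-proportional endpoints is only bounded below by something like $2^{-p}$; the quotient can therefore exceed $\hat\lambda_2(p,\xi,\beta)$ along the arc. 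The reason the analogous segment works when $p=2$ is bilinearity: the cross terms in the form are converted, via the eigenvalue equation for $\hat u_1$, into $\hat\lambda_1$ times the $L^2$ cross terms, so the quotient is a weighted average of $\hat\lambda_1$ and $\hat\lambda_2$. There is no analogue of this decomposition for $p\ne2$, and the $\varepsilon$-slack/"perturb the waypoints" remark does not supply one. This is exactly why the Cuesta--de Figueiredo--Gossez argument (and its Robin/indefinite-potential adaptation in [30], [31]) does not exhibit an explicit low path through $\hat u_2$; instead it shows that the path-minimax value is itself an eigenvalue strictly greater than $\hat\lambda_1(p,\xi,\beta)$ and that no spectral (Fu\v cik) points lie strictly between $\hat\lambda_1$ and that value, which forces equality with $\hat\lambda_2$. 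As it stands, your proof of the inequality $\inf_{\hat\gamma}\max_t k_p(\hat\gamma(t))\le\hat\lambda_2(p,\xi,\beta)$ is missing its key estimate.
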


Finally, let us fix our notation. For $x\in\RR$ we set $x^\pm=\max\{\pm x,0\}$ and for $u\in W^{1,p}(\Omega)$ we define $u^\pm(\cdot) = u(\cdot)^\pm$. We have
$u^\pm \in W^{1,p}(\Omega),\ u = u^+ - u^-, \ |u| = u^+ + u^-.$
For $u, v \in W^{1,p}(\Omega),\ v\leqslant u$, we introduce the order interval
$[v,u] = \{y\in W^{1,p}(\Omega) : v(z)\leqslant y(z)\leqslant u(z)\ \mbox{for almost all}\ z\in\Omega\}.$

\section{Solutions of Constant Sign}

In this section we produce two nontrivial, constant sign smooth solutions. These solutions are obtained by global optimization of suitable truncations and perturbations of the energy functional. In addition, we establish the existence of extremal constant sign solutions of (\ref{eq1}), that is, we show that problem (\ref{eq1}) has a smallest positive solution and a  {biggest} negative solution. These extremal constant sign solutions are crucial in obtaining a nodal (sign changing) solution in Section 4.

The hypotheses on the nonsmooth primitive $F(z,x)$ are the following:

\smallskip
$H_1:$  $F:\Omega\times\RR\rightarrow\RR$ is a locally Lipschitz integrand (that is, $z\mapsto F(z,x)$ is measurable and for almost all $z\in\Omega,\ x\mapsto F(z,x)$ is  locally Lipschitz) such that $F(z,0) = 0$ for almost all $z\in\Omega$ and
\begin{itemize}
	\item [(i)] $|v|\leqslant a(z)(1+|x|^{p-1})$ for almost all $z\in\Omega$  {and for all} $x\in\RR$, $v\in\partial F(z,x)$, with $a\in L^\infty(\Omega)_+$;
	\item [(ii)] $\limsup_{x\rightarrow\pm\infty} \frac{v}{|x|^{p-2}x} \leqslant \frac{c_1}{p-1} \hat{\lambda}_1(p,\hat{\xi},\hat{\beta})$ uniformly for almost all $z\in\Omega$  {and for all} $v\in\partial F(z,x)$, with $c_1>0$ as in (\ref{eq4}) and with $\hat{\xi}=\frac{p-1}{c_1}\xi$, $\hat{\beta}=\frac{p-1}{c_1}\beta$;
	\item [(iii)] $\lim_{x\rightarrow\pm\infty}[vx-pF(z,x)] = +\infty$ uniformly for almost all $z\in\Omega$  {and for all} $v\in\partial F(z,x)$;
	\item [(iv)] there exists a function $\vartheta_0\in L^\infty(\Omega)$ such that
		$$\tilde{c}\hat{\lambda}_1(q, \tilde{\xi}^+,\tilde{\beta}) \leqslant \vartheta_0(z)\ \mbox{for almost all}\ z\in\Omega,\ \vartheta_0\not\equiv\tilde{c}\hat{\lambda}_1(q, \tilde{\xi}^+, \tilde{\beta})$$
		$$\vartheta_0(z) \leqslant \liminf_{x\rightarrow 0}\frac{v}{|x|^{q-2}x}\ \mbox{uniformly for almost all}\ z\in\Omega\ \mbox{, all}\ v\in\partial F(z,x)$$
		with $\tilde{c}>0$ and $q\in(1,p]$ as in hypothesis $H(a)(iv)$ and $\tilde{\xi}=\frac{1}{\tilde{c}}\xi$, $\tilde{\beta}=\frac{1}{\tilde{c}}\beta$.
\end{itemize}
\begin{remark}
	If $a(y)=|y|^{p-2}y$ for all $y\in\RR^N$ (the case of the $p$-Laplace differential operator), then $c_1 = p-1$ (see (\ref{eq4})). So, $\hat{\xi} = \xi$, $\hat{\beta}=\beta$ (see hypothesis $H_1(ii)$). Hence in this special case, we see that hypothesis $H_1(ii)$ incorporates in our framework problems which  are resonant at $\pm\infty$ with respect to the principal eigenvalue of the differential operator $u\mapsto -\Delta_pu+\xi(z)|u|^{p-2}u$ with Robin boundary condition. Hypothesis $H_1(iii)$ says that this resonance occurs from the left of $\hat{\lambda}_1(p,\hat{\xi},\hat{\beta})$ in the sense that
$$\frac{c_1}{p-1}\hat{\lambda}_1(p,\hat{\xi},\hat{\beta})|x|^p - pF(z,x)\rightarrow+\infty\ as\ x\rightarrow\pm\infty,\ \mbox{uniformly for almost all}\ z\in\Omega\,.$$
\end{remark}

This fact makes the energy (Euler) functional of the problem coercive and so techniques of global optimization can be used. To  {better understand} hypothesis $H_1(iv)$ it is again helpful to see the situation in the special case of the $p$-Laplacian (that is, $a(y)=|y|^{p-2}y$). Then we have $q=p$ and $\tilde{c}=1$ (see hypothesis $H(a)(iv)$). Hence we see that hypothesis $H_1(iv)$ implies that as $x\rightarrow0$, the quotient $\frac{v}{|x|^{p-2}x}$ stays above $\hat{\lambda}_1(p,\xi,\beta)$ and so we have a crossing reaction term. In fact, hypothesis $H_1(iv)$ permits also the presence of a concave (that is, of a $(p-1)$-superlinear) term near zero.

Let $\mu>\|\xi\|_\infty$ (see hypothesis $H(\xi)$) and consider the following truncations-perturbations of the primitive $F(z,x)$:
\begin{eqnarray}\label{eq8}
	&&\hat{F}_+(z,x) = \left\{\begin{array}{ll}
		\mbox{0} &\mbox{if}\ x\leqslant0\\
		F(z,x) + \frac{\mu}{p}|x|^p\ &\mbox{if}\ 0<x
	\end{array}\right.
	\mbox{and}\\
	&&\hat{F}_-(z,x) = \left\{\begin{array}{ll}
		F(z,x) + \frac{\mu}{p}|x|^p\ &\mbox{if}\ x<0\\
		\mbox{0} &\mbox{if}\ 0\leqslant x.
	\end{array}\right. \nonumber
\end{eqnarray}

Both $\hat{F}_\pm(z,x)$ are locally Lipschitz integrands and we have
\begin{eqnarray}\label{eq9}
	&&\partial\hat{F}_+(z,x)\subseteq \left\{\begin{array}{ll}
		\mbox{0} &\mbox{if}\ x<0\\
		\{r\partial F(z,0) : 0\leqslant r\leqslant1\} &\mbox{if}\ x=0\\
		\partial F(z,x) + \mu x^{p-1} &\mbox{if}\ 0<x
	\end{array}\right.
	\mbox{and} \\
	&&\partial \hat{F}_-(z,x)\subseteq \left\{\begin{array}{ll}
		\partial F(z,x) + \mu|x|^{p-2}x &\mbox{if}\ x<0\\
		\{r\partial F(z,0) : 0\leqslant r\leqslant1\} &\mbox{if}\ x=0\\
		\mbox{0} &\mbox{if}\ 0<x
	\end{array}\right.\nonumber
\end{eqnarray}
(see Clarke [14], p. 42). Then we introduce the locally Lipschitz functionals $\hat{\varphi}_\pm:W^{1,p}(\Omega)\rightarrow\RR$ defined by
$$\hat{\varphi}_\pm(u) = \int_\Omega G(Du)dz + \frac{1}{p}\int_\Omega(\xi(z)+\mu)|u|^pdz + \frac{1}{p}\int_{\partial\Omega}\beta(z)|u|^pd\sigma - \int_\Omega\hat{F}_\pm(z,u)dz\quad
	\mbox{for all}\ u\in W^{1,p}(\Omega).$$

Also, let $\varphi:W^{1,p}(\Omega)\rightarrow\RR$ be the energy (Euler) functional for problem (\ref{eq1}) defined by
$$\varphi(u)=\int_\Omega G(Du)dz + \frac{1}{p}\int_\Omega\xi(z)|u|^pdz + \frac{1}{p}\int_{\partial\Omega}\beta(z)|u|^pd\sigma - \int_\Omega F(z,u)dz\quad \mbox{for all}\ u\in W^{1,p}(\Omega).$$
This functional is also locally Lipschitz.
\begin{proposition}\label{prop7}
	If hypotheses $H(a)(i)(ii)(iii)$, $H(\xi)$, $H(\beta)$, $H_1$ hold, then the functionals $\varphi$ and $\hat{\varphi}_\pm$ are coercive.
\end{proposition}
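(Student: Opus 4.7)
The plan is to argue by contradiction in the standard resonant-problem style. Suppose $\varphi$ is not coercive; then there is a sequence $\{u_n\}\subseteq W^{1,p}(\Omega)$ with $\|u_n\|\to+\infty$ and $\varphi(u_n)\leq M$ for some $M>0$. Normalize $y_n=u_n/\|u_n\|$, so $\|y_n\|=1$; passing to a subsequence I may assume $y_n\rightharpoonup y$ in $W^{1,p}(\Omega)$, strongly in $L^p(\Omega)$ and (by the compact trace embedding) in $L^p(\partial\Omega)$, and $y_n(z)\to y(z)$ a.e.

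First I identify $y$. Lebourg's mean value theorem, together with $H_1(i)$ and the asymptotic bound $H_1(ii)$, integrates to $F(z,x)\leq\tfrac{1}{p}\bigl(\tfrac{c_1}{p-1}\hat\lambda_1(p,\hat\xi,\hat\beta)+\varepsilon\bigr)|x|^p+C_\varepsilon$ for every $\varepsilon>0$, uniformly in $z$. Combined with the lower estimate $G(y)\geq\tfrac{c_1}{p(p-1)}|y|^p$ from Corollary \ref{cor3}, dividing $\varphi(u_n)\leq M$ by $\|u_n\|^p$, taking the $\liminf$ (weak lower semicontinuity of $k_p$), and then letting $\varepsilon\to 0$, I obtain $k_p(y;\hat\xi,\hat\beta)\leq\hat\lambda_1(p,\hat\xi,\hat\beta)\|y\|_p^p$. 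By the variational characterization \eqref{eq7} this forces equality, so either $y\equiv 0$ or $y$ is a first eigenfunction. The case $y\equiv 0$ is disposed of immediately: the rescaled inequality then forces $\|Dy_n\|_p\to 0$ together with $\|y_n\|_p\to 0$ and $\|y_n\|_{L^p(\partial\Omega)}\to 0$, contradicting $\|y_n\|=1$.

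The hard part is the remaining case: $y=\pm t\,\hat u_1(p,\hat\xi,\hat\beta)\in\pm D_+$, so $|y(z)|>0$ everywhere on $\overline\Omega$. Here the above chain of inequalities also delivers $\limsup\|Dy_n\|_p\leq\|Dy\|_p$, which upgrades weak to strong convergence $y_n\to y$ in $W^{1,p}(\Omega)$, so $|u_n(z)|=\|u_n\||y_n(z)|\to+\infty$ for a.a.\ $z\in\Omega$. Select measurable $v_n(z)\in\partial F(z,u_n(z))$. By the nonresonance condition $H_1(iii)$, applied with its uniformity in $z$, $v_n(z)u_n(z)-pF(z,u_n(z))\to+\infty$ a.e.; the growth bound $H_1(i)$ furnishes a uniform lower bound, and Fatou's lemma yields
$$\int_\Omega\bigl[v_nu_n-pF(z,u_n)\bigr]\,dz\to+\infty.$$
To close the contradiction I use the identity
$$p\varphi(u_n)\geq\tfrac{c_1}{p-1}\bigl[k_p(u_n;\hat\xi,\hat\beta)-\hat\lambda_1\|u_n\|_p^p\bigr]+\int_\Omega\bigl[\tfrac{c_1}{p-1}\hat\lambda_1|u_n|^p-pF(z,u_n)\bigr]\,dz,$$
whose first bracket is nonnegative by \eqref{eq7}. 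Splitting the second integrand as $\bigl[\tfrac{c_1}{p-1}\hat\lambda_1|u_n|^p-v_nu_n\bigr]+[v_nu_n-pF(z,u_n)]$ and controlling the first summand by $H_1(ii)$ (which uniformly forces $v_nu_n\leq\tfrac{c_1}{p-1}\hat\lambda_1|u_n|^p$ asymptotically), the Fatou conclusion forces $p\varphi(u_n)\to+\infty$, contradicting $\varphi(u_n)\leq M$.

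The coercivity of $\hat\varphi_\pm$ follows by the same scheme applied to the active half-axis (say $u>0$ for $\hat\varphi_+$), using \eqref{eq9} to interpret the Clarke subgradient of the truncated primitive; on the other half-axis the choice $\mu>\|\xi\|_\infty$ makes $\xi+\mu$ strictly positive, so $\hat\varphi_\pm$ is bounded below by a coercive multiple of $\|u^\mp\|^p$ there, and the two estimates combine to give coercivity.
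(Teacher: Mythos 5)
Your overall contradiction scheme coincides with the paper's: normalize $y_n=u_n/\|u_n\|$, identify the weak limit $y$ through \eqref{eq7}, dispose of $y=0$, and in the case $y=\tau\hat u_1(p,\hat\xi,\hat\beta)$ reach a contradiction by comparing a Fatou--type divergence with the bound $\int_\Omega\bigl[\tfrac{c_1}{p-1}\hat\lambda_1(p,\hat\xi,\hat\beta)|u_n|^p-pF(z,u_n)\bigr]dz\le pc_6$ coming from $\varphi(u_n)\le c_6$, Corollary \ref{cor3} and \eqref{eq7} (this is exactly \eqref{eq24}). Your identification of $y$ via the $\varepsilon$-upper bound $F(z,x)\le\frac1p(\frac{c_1}{p-1}\hat\lambda_1+\varepsilon)|x|^p+C_\varepsilon$ is a legitimate variant of the paper's Dunford--Pettis step, and the strong $W^{1,p}$-convergence you extract is harmless but unnecessary (a.e.\ convergence of $y_n$, available from the $L^p$-convergence, already gives $u_n(z)\to+\infty$ a.e.).

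The gap is in the closing step. You split $\tfrac{c_1}{p-1}\hat\lambda_1|u_n|^p-pF(z,u_n)=\bigl[\tfrac{c_1}{p-1}\hat\lambda_1|u_n|^p-v_nu_n\bigr]+\bigl[v_nu_n-pF(z,u_n)\bigr]$ and claim that $H_1(ii)$ ``uniformly forces $v_nu_n\le\tfrac{c_1}{p-1}\hat\lambda_1|u_n|^p$ asymptotically''. It does not: $H_1(ii)$ is a $\limsup$ condition with no strictness, so it only yields $vx\le\bigl(\tfrac{c_1}{p-1}\hat\lambda_1+\varepsilon\bigr)|x|^p$ for $|x|\ge M_\varepsilon$; the quotient may approach $\tfrac{c_1}{p-1}\hat\lambda_1$ from above (e.g.\ like $\tfrac{c_1}{p-1}\hat\lambda_1+1/\log|x|$), in which case your first bracket tends pointwise to $-\infty$. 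After integration the error is of order $\varepsilon\|u_n\|_p^p\sim\varepsilon\|u_n\|^p\to+\infty$ for each fixed $\varepsilon$, and it cannot be absorbed by the second bracket because Fatou gives its divergence without any rate. So the additive splitting does not prove that $\int_\Omega\bigl[\tfrac{c_1}{p-1}\hat\lambda_1|u_n|^p-pF(z,u_n)\bigr]dz\to+\infty$, i.e.\ that $p\varphi(u_n)\to+\infty$. The conclusion is true, but it must be obtained the way the paper does: from \eqref{eq17} one gets, via Clarke's quotient rule and Rademacher's theorem, the differential inequality $\frac{d}{ds}\bigl(F(z,s)/s^p\bigr)\ge\hat\mu/s^{p+1}$ on $[c_9,\infty)$ (see \eqref{eq18}--\eqref{eq19}); integrating in $s$ and letting $s\to+\infty$ while using $\limsup_{x\to+\infty}F(z,x)/x^p\le\frac{c_1}{p(p-1)}\hat\lambda_1$ from $H_1(ii)$ and Lebourg's theorem (see \eqref{eq20}) yields $\tfrac{c_1}{p-1}\hat\lambda_1x^p-pF(z,x)\ge\hat\mu$ for $x\ge c_9$, and since $\hat\mu>0$ is arbitrary this gives the uniform divergence \eqref{eq22}, to which Fatou is then applied. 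In other words, $H_1(ii)$ and $H_1(iii)$ must be combined multiplicatively through the quotient $F(z,s)/s^p$, not additively term by term; as written, your last step is not a valid derivation. The remark on $\hat\varphi_\pm$ is acceptable once the main argument is repaired.
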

\begin{proof}
	We  {present} the proof for the functional $\varphi$, the proofs for $\hat{\varphi}_\pm$ are similar.
	
	We proceed by contradiction. So, suppose that $\varphi$ is not coercive. Then we can find $\{u_n\}_{n\geqslant1}\subseteq W^{1,p}(\Omega)$ and $c_6>0$ such that
	\begin{equation}\label{eq10}
		\|u_n\|\rightarrow+\infty\ \mbox{and}\ \varphi(u_n)\leqslant c_6\ \mbox{for all}\ n\in\NN.
	\end{equation}
	
	We have
	\begin{equation}\label{eq11}
		\int_\Omega G(Du_n)dz + \frac{1}{p}\int_\Omega\xi(z)|u_n|^pdz + \frac{1}{p}\int_{\partial\Omega}\beta(z)|u_n|^pd\sigma - \int_\Omega F(z,u_n)dz\leqslant c_6\quad \mbox{for all}\ n\in\NN.
	\end{equation}
	
	We set $y_n=\frac{u_n}{\|u_n\|},\ n\in\NN$. Then $\|y_n\|=1$ for all $n\in\NN$ and so by passing to a suitable subsequence if necessary, we may assume that
	\begin{equation}\label{eq12}
		y_n\xrightarrow{w}y\ \mbox{in}\ W^{1,p}(\Omega)\ \mbox{and}\ y_n\rightarrow y\ \mbox{in}\ L^p(\Omega)\ \mbox{and}\ \mbox{in}\ L^p(\partial\Omega).
	\end{equation}
	
	From (\ref{eq11}) and Corollary 3, we have
	\begin{equation}\label{eq13}
		\frac{c_1}{p(p-1)}\|Dy_n\|^p_p + \frac{1}{p}\int_\Omega\xi(z)|y_n|^pdz + \frac{1}{p}\int_\Omega\beta(z)|y_n|^pd\sigma - \int_\Omega\frac{F(z,u_n)}{\|u_n\|^p}dz \leqslant \frac{c_6}{\|u_n\|^p}\quad
		 \mbox{for all}\ n\in\NN.\nonumber
	\end{equation}
	
	From hypothesis $H_1(i)$ and Lebourg's nonsmooth mean value theorem (see Clarke [14], p. 41), we obtain
	\begin{eqnarray*}
		|F(z,x)|\leqslant c_7(1+|x|^p)\ \mbox{for almost all}\ z\in\RR\ \mbox{and some}\ c_7>0,\\
		\Rightarrow\left\{\frac{F(\cdot,u_n(\cdot))}{\|u_n\|^p}\right\}_{n\geqslant1} \subseteq L^1(\Omega)\ \mbox{is uniformly integrable (see}\ (\ref{eq12})).
	\end{eqnarray*}
	
	Then the Dunford-Pettis theorem and hypothesis $H_1(ii)$ imply (at least for a subsequence) that
	\begin{equation}\label{eq14}
		\frac{F(\cdot,u_n(\cdot))}{\|u_n\|^p}\xrightarrow{w}\frac{c_1}{p(p-1)}\eta|y|^p\ \mbox{in}\ L^1(\Omega)
	\end{equation}
	\begin{equation*}
		\mbox{with}\ \eta\in L^\infty(\Omega), \eta(z) \leqslant\hat{\lambda}_1(p,\hat{\xi},\hat{\beta})\ \mbox{for almost all}\ z\in\Omega.
	\end{equation*}
	
	So, passing to the limit as $n\rightarrow\infty$ in (\ref{eq13}) and using (\ref{eq10}), (\ref{eq12}), (\ref{eq14}), we obtain
	\begin{eqnarray}
		& & \frac{c_1}{p(p-1)} \|Dy\|^p_p + \frac{1}{p}\int_\Omega\xi(z)|y|^pdz + \frac{1}{p}\int_{\partial\Omega}\beta(z)|y|^pd\sigma \leqslant \frac{c_1}{p(p-1)}\int_\Omega\eta|y|^pdz, \nonumber \\
		& \Rightarrow & \frac{c_1}{p(p-1)}\left[\|Dy\|^p_p + \int_\Omega\hat{\xi}(z)|y|^pdz + \int_{\partial\Omega}\hat{\beta}(z)|u|^pd\sigma\right] \leqslant \frac{c_1}{p(p-1)}\int_\Omega\eta|y|^pdz, \nonumber \\
		& \Rightarrow & \|Dy\|_p^p + \int_\Omega\hat{\xi}(z)|y|^pdz + \int_{\partial\Omega}\beta(z)|y|^pd\sigma \leqslant \int_\Omega\eta(z)|y|^p dz. \label{eq15}
	\end{eqnarray}
	
	First, we assume that $\eta\not\equiv\hat{\lambda}_1(p,\hat{\xi},\hat{\beta})$. Then from (\ref{eq15}) and Lemma 4.11 of Mugnai and Papageorgiou [30], we see that we can find $c_8>0$ such that
	 $c_8\|y\|^p\leqslant0$
	It follows that $y=0$.
	We deduce that
	\begin{eqnarray*}
		& & \|Dy_n\|_p\rightarrow0,\\
		& \Rightarrow & y_n\rightarrow0\ \mbox{in}\ W^{1,p}(\Omega)\ \mbox{(see (\ref{eq12}))},
	\end{eqnarray*}
	which contradicts the fact that $\|y_n\|=1$ for all $n\in\NN$.
	
	Now, we assume that $\eta(z)=\hat{\lambda}_1(p,\hat{\xi},\hat{\beta})$ for almost all $z\in\Omega$. Then from (\ref{eq15}) and (\ref{eq7}) we have
	\begin{eqnarray*}
		& & \|Dy\|^p_p + \int_\Omega\hat{\xi}(z)|y|^pdz + \int_{\partial\Omega}\hat{\beta}(z)|y|^pd\sigma = \hat{\lambda}_1(p,\hat{\xi},\hat{\beta})\|y\|_p^p,\\
		& \Rightarrow & y = \tau\hat{u}_1(p,\hat{\xi},\hat{\beta})\ \mbox{for some}\ \tau\in\RR.
	\end{eqnarray*}
	
	If $\tau=0$, then $y=0$ and as above, we have
	$y_n\rightarrow0\ \mbox{in}\ W^{1,p}(\Omega),$
	which contradicts the fact that $\|y_n\|=1$ for all $n\in\NN$.
	Then $\tau\neq0$ and in order to fix things, we assume that $\tau>0$ (the reasoning is similar if $\tau<0$). Since $\hat{u}_1(p,\hat{\xi},\hat{\beta})\in D_+$, we have
	$y(z)>0$ for all $z\in\Omega$.
	It follows that
	\begin{equation}\label{eq16}
		u_n(z)\rightarrow+\infty\ \mbox{for all}\ z\in\Omega.
	\end{equation}
	
	Hypothesis $H_1(iii)$ implies that given any $\hat{\mu}>0$, we can find $c_9=c_9(\mu)>0$ such that
	\begin{equation}\label{eq17}
		vx-pF(z,x)\geqslant\hat{\mu}\ \mbox{for almost all}\ z\in\Omega,\ \mbox{all}\ x\geqslant c_9,\ \mbox{all}\ v\in\partial F(z,x).
	\end{equation}
	
	For almost all $z\in\Omega$, the function $s\mapsto\frac{F(z,s)}{s^p}$ is locally Lipschitz on $[c_9,+\infty)$. So, using Proposition 2.3.14 of Clarke [14], p. 48, we have
	\begin{equation}\label{eq18}
		\partial\left(\frac{F(z,s)}{s^p}\right) \subseteq \frac{s^p\partial F(z,s)-ps^{p^{-1}}F(z,s)}{s^{2p}}.
	\end{equation}
	
	By Rademacher's theorem (see Gasinski and Papageorgiou [13], Theorem 3.120, p. 433) we know that for almost all $z\in\Omega$ the function $s\mapsto\frac{F(z,s)}{s^p}$ is differentiable for almost all $s\in[c_9,\infty)$ and at every such point $s\in[c_9,\infty)$ of differentiability, we have
	\begin{equation*}
		\frac{d}{ds}\left(\frac{F(z,s)}{s^p}\right)\in\partial\left(\frac{F(z,s)}{s^p}\right)
	\end{equation*}
	(see Clarke [14], Theorem 2.5.1, p. 63). So, from (\ref{eq18}) we see that we can find $v\in\partial F(z,s)$ such that
	\begin{eqnarray}
		&\displaystyle \frac{d}{ds}\left(\frac{F(z,s)}{s^p}\right) & = \frac{s^pv-ps^{p-1}F(z,s)}{s^{2p}}  = \frac{sv-pF(z,s)}{s^{p+1}} \nonumber \\
		& & \geqslant \frac{\hat{\mu}}{s^{p+1}}\ \mbox{for almost all}\ z\in\Omega,\ \mbox{all}\ s\in[c_9,\infty)\ (see (\ref{eq17})), \nonumber \\
		&\displaystyle \Rightarrow \frac{F(z,y)}{y^p} - \frac{F(z,x)}{x^p} & \geqslant - \frac{\hat{\mu}}{p}\left[\frac{1}{y^p}-\frac{1}{x^p}\right]\ \mbox{for almost all}\ z\in\Omega,\ all\ y\geqslant x\geqslant c_9. \label{eq19}
	\end{eqnarray}
	
	From hypothesis $H_1(ii)$ and using Lebourg's mean value theorem (see Clarke [14], Theorem 2.3.7, p. 41), we obtain
	\begin{equation}\label{eq20}
		\limsup_{x\rightarrow+\infty}\frac{F(z,x)}{x^p}\leqslant\frac{c_1}{p(p-1)}\hat{\lambda}_1(p,\hat{\xi},\hat{\beta})\ \mbox{uniformly for almost all}\ z\in\Omega.
	\end{equation}
	
	In (\ref{eq19}) we pass to the limit as $y\rightarrow+\infty$ and use (\ref{eq20}). Then
	\begin{eqnarray}
		& & \frac{c_1}{p(p-1)}\hat{\lambda}_1(p,\hat{\xi}, \hat{\beta}) - \frac{F(z,x)}{x^p} \geqslant \frac{\hat{\mu}}{p}\frac{1}{x^p}\ \mbox{for almost all}\ z\in\Omega,\ \mbox{all}\ x\geqslant c_9, \nonumber \\
		& \Rightarrow & \frac{c_1}{p-1}\hat{\lambda}_1(p,\hat{\xi}, \hat{\beta})x^p - pF(z,x) \geqslant \hat{\mu}\ \mbox{for almost all}\ z\in\Omega,\ all\ x\geqslant c_9. \label{eq21}
	\end{eqnarray}
	
	Recall that $\hat{\mu}>0$ is arbitrary. Then it follows from (\ref{eq21}) that
	\begin{equation}\label{eq22}
		\lim_{x\rightarrow+\infty}\left[\frac{c_1}{p-1}\hat{\lambda}_1(p,\hat{\xi}, \hat{\beta})x^p - pF(z,x)\right] = +\infty\ \mbox{uniformly for almost all}\ z\in\Omega.
	\end{equation}
	
	From (\ref{eq16}), (\ref{eq22}) and Fatou's lemma, we have
	\begin{equation}\label{eq23}
		\lim_{n\rightarrow\infty}\int_\Omega\left[\frac{c_1}{p-1}\hat{\lambda}_1(p,\hat{\xi}, \hat{\beta})|u_n(z)|^p-pF(z,u_n(z))\right]dz = +\infty .
	\end{equation}
	
	From (\ref{eq11}) and Corollary 3, we have
	\begin{eqnarray}
		& & \frac{c_1}{p(p-1)}\left[\|Du_n\|_p^p + \int_\Omega\hat{\xi}(z)|u_n|^pdz + \int_{\partial\Omega}\hat{\beta}(z)|u_n|^pd\sigma\right] - \nonumber \\
		& & \int_\Omega F(z,u_n)dz \leqslant c_6\ \mbox{for all}\ n\in\NN, \nonumber \\
		& \Rightarrow & \int_\Omega\left[\frac{c_1}{p-1}\hat{\lambda}_1(p,\hat{\xi}, \hat{\beta})|u_n|^p - pF(z,u_n)\right]dz\leqslant pc_6\ \mbox{for all}\ n\in\NN\ \mbox{(see (\ref{eq7}))}. \label{eq24}
	\end{eqnarray}
	
	Comparing (\ref{eq23}) and (\ref{eq24}), we reach a contradiction. This proves that $\varphi$ is coercive. Similarly, we show the coercivity of the functionals $\hat{\varphi}_\pm$.
\end{proof}

\begin{corollary}\label{cor8}
	If hypotheses $H(a)(i)(ii)(iii), H(\xi),H(\beta)$ and $H_1$ hold, then the functionals $\varphi$ and $\hat{\varphi}_\pm$ satisfy the nonsmooth $C$-condition.
\end{corollary}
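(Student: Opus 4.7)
My plan is to reduce the nonsmooth C-condition to the $(S)_+$ property of the operator $A$ (Proposition \ref{prop5}), using coercivity (Proposition \ref{prop7}) together with the standard subdifferential calculus for the functional $\varphi$ and the Aubin--Clarke formula for the integral functional generated by $F$.

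First, I would take a sequence $\{u_n\}_{n\ge 1}\subseteq W^{1,p}(\Omega)$ with $\{\varphi(u_n)\}$ bounded and $m_\varphi(u_n)\to 0$. For each $n$, by the definition of $m_\varphi$ in \eqref{eq3}, I can pick $u_n^*\in\partial\varphi(u_n)$ with $\|u_n^*\|_*\le m_\varphi(u_n)+1/n\to 0$. Since $u\mapsto\int_\Omega G(Du)dz$ is $C^1$ and the potential and boundary integrals define $C^1$-functions on $W^{1,p}(\Omega)$, by the nonsmooth sum rule (equality when one summand is $C^1$) and by the Aubin--Clarke representation of the subdifferential of $u\mapsto\int_\Omega F(z,u)dz$, I can write
$$u_n^*=A(u_n)+\xi(\cdot)|u_n|^{p-2}u_n+\gamma_0^*\bigl(\beta(\cdot)|u_n|^{p-2}u_n\bigr)-v_n\quad\text{in}\ W^{1,p}(\Omega)^*,$$
where $v_n\in L^{p'}(\Omega)$ is a measurable selection of $z\mapsto\partial F(z,u_n(z))$. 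By the coercivity established in Proposition \ref{prop7}, the boundedness of $\{\varphi(u_n)\}$ forces $\{u_n\}$ to be bounded in $W^{1,p}(\Omega)$, so along a subsequence $u_n\xrightarrow{w}u$ in $W^{1,p}(\Omega)$, $u_n\to u$ in $L^p(\Omega)$, and $u_n\to u$ in $L^p(\partial\Omega)$ via the compactness of the trace map.

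Next, I would test $u_n^*$ against $u_n-u$. The growth estimate in $H_1(i)$ gives $\{v_n\}$ bounded in $L^{p'}(\Omega)$, so $\int_\Omega v_n(u_n-u)dz\to 0$ by the strong $L^p$-convergence. Similarly, the $L^p(\Omega)$ and $L^p(\partial\Omega)$ convergences handle $\int_\Omega\xi(z)|u_n|^{p-2}u_n(u_n-u)dz$ and $\int_{\partial\Omega}\beta(z)|u_n|^{p-2}u_n(u_n-u)d\sigma$. Since $\langle u_n^*,u_n-u\rangle\to 0$ because $\|u_n^*\|_*\to 0$ and $\{u_n-u\}$ is bounded, what remains is $\langle A(u_n),u_n-u\rangle\to 0$. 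Proposition \ref{prop5} then yields $u_n\to u$ in $W^{1,p}(\Omega)$, establishing the nonsmooth C-condition for $\varphi$.

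For the truncated functionals $\hat{\varphi}_\pm$, the argument is identical: the only change is that in the subdifferential representation the selection $v_n$ lies in $\partial\hat{F}_\pm(z,u_n(z))$, and the extra term $\frac{\mu}{p}\int_\Omega|u|^p dz$ is also $C^1$ with gradient $\mu|u|^{p-2}u$, which again passes to zero against $u_n-u$ by strong $L^p$-convergence. The inclusion \eqref{eq9} guarantees that the selection has the same $L^{p'}$-bound (up to an additive $\mu|u_n|^{p-1}$ term which is controlled by the bound on $\{u_n\}$). The main obstacle, and the only subtle point, is the justification of the subdifferential formula for $u_n^*$: one must invoke the sum rule carefully because $\partial F(z,\cdot)$ is multivalued, but the fact that the remaining summands are $C^1$ gives equality, and then the Aubin--Clarke theorem produces the measurable selection $v_n$. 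Once this is in place, the rest is a routine application of the $(S)_+$-property.
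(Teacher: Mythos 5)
Your argument is correct and is precisely the standard justification the paper leaves implicit: coercivity (Proposition \ref{prop7}) makes every such sequence bounded, the sum rule plus the Aubin--Clarke representation give the form of $u_n^*$, and after testing with $u_n-u$ the $(S)_+$ property of $A$ (Proposition \ref{prop5}) yields strong convergence, exactly as the authors do elsewhere (e.g.\ in the passage from \eqref{eq52} to \eqref{eq55}). No gaps; this matches the intended proof of the corollary.
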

Using Proposition 7 and global minimization of $\hat{\varphi}_\pm$ (the direct method of calculus of variations), we can produce two nontrivial smooth solutions of constant sign.
\begin{proposition}\label{prop9}
	If hypotheses $H(a), H(\xi),H(\beta)$ and $H_1$ hold, then problem (\ref{eq1}) was at least two nontrivial constant sign smooth solutions
	$$u_0\in D_+\ and\ w_0\in-D_+,$$
	which are local minimizers of $\varphi$.
\end{proposition}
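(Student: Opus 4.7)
The strategy is the direct method applied to the coercive auxiliary functionals $\hat{\varphi}_\pm$, followed by transfer to local minimizers of $\varphi$ via Proposition~\ref{prop4}. By Proposition~\ref{prop7}, $\hat{\varphi}_+$ is coercive on $W^{1,p}(\Omega)$; convexity of $G$ makes $u\mapsto\int_\Omega G(Du)\,dz$ sequentially weakly lower semicontinuous, and the remaining terms are sequentially weakly continuous via the compact embeddings $W^{1,p}(\Omega)\hookrightarrow L^p(\Omega)$ and $\gamma_0:W^{1,p}(\Omega)\to L^p(\partial\Omega)$ together with the $L^p$-growth of $F$ from $H_1(i)$ and Lebourg's mean value theorem. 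Hence $\hat{\varphi}_+$ attains its infimum at some $u_0\in W^{1,p}(\Omega)$, and $\hat{\varphi}_-$ at some $w_0\in W^{1,p}(\Omega)$.

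The main obstacle is the nontriviality $u_0\neq 0$, equivalently $\hat{\varphi}_+(u_0)<0=\hat{\varphi}_+(0)$. The plan is to test along $t\hat u_1$, where $\hat u_1=\hat u_1(q,\tilde{\xi}^+,\tilde{\beta})\in D_+$ is the normalized principal eigenfunction appearing in $H_1(iv)$. Hypothesis $H(a)(iv)$ gives $G_0(t)=\tfrac{\tilde c}{q}t^q+o(t^q)$ near $0$, while $H_1(iv)$ combined with Lebourg's theorem gives $F(z,x)\geqslant\tfrac{1}{q}(\vartheta_0(z)-\varepsilon)|x|^q$ for $|x|$ small; the $\mu$-perturbation inside $\hat F_+$ cancels because $\hat u_1>0$. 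The $t^q$-leading coefficient of $\hat{\varphi}_+(t\hat u_1)$ as $t\downarrow 0$ is therefore
\begin{equation*}
\frac{\tilde c}{q}\Bigl[\|D\hat u_1\|_q^q+\int_\Omega\tilde{\xi}\,\hat u_1^q\,dz+\int_{\partial\Omega}\tilde{\beta}\,\hat u_1^q\,d\sigma-\int_\Omega\frac{\vartheta_0}{\tilde c}\hat u_1^q\,dz\Bigr],
\end{equation*}
where the $\xi,\beta$-terms contribute at order $t^p$ and hence are absorbed (as $o(t^q)$ when $q<p$, or combine with the above when $q=p$ after the rescaling $\tilde{\xi}=\xi/\tilde c$, $\tilde{\beta}=\beta/\tilde c$). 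Since $\tilde{\xi}\leqslant\tilde{\xi}^+$ and $\|\hat u_1\|_q=1$, formula (\ref{eq7}) bounds the first three summands above by $\hat{\lambda}_1(q,\tilde{\xi}^+,\tilde{\beta})$, while $\vartheta_0/\tilde c\geqslant\hat{\lambda}_1(q,\tilde{\xi}^+,\tilde{\beta})$ with strict inequality on a set of positive measure and $\hat u_1>0$ on $\overline\Omega$ force the bracket to be strictly negative. Thus $\hat{\varphi}_+(t\hat u_1)<0$ for small $t>0$, so $u_0\neq 0$.

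From $0\in\partial\hat{\varphi}_+(u_0)$, the Clarke sum rule together with (\ref{eq9}) yields a measurable selection $v_0(z)\in\partial F(z,u_0(z))$ a.e.\ and the weak form of the corresponding Robin inclusion; testing with $-u_0^-$ and using Lemma~\ref{lem2}$(c)$ together with $\mu>\|\xi\|_\infty$ forces $u_0\geqslant 0$, whence $u_0$ solves (\ref{eq1}). Lieberman's regularity theory~[18] then gives $u_0\in C^{1,\alpha}(\overline\Omega)\cap C_+$, and the Pucci-Serrin nonlinear maximum principle~[19] upgrades this to $u_0\in D_+$. Since $u_0\in D_+$, for $\|h\|_{C^1(\overline\Omega)}$ sufficiently small we have $u_0+h>0$, so $\hat F_+(\cdot,u_0+h)=F(\cdot,u_0+h)+\tfrac{\mu}{p}(u_0+h)^p$ and the $\mu$-terms cancel, giving $\hat{\varphi}_+(u_0+h)=\varphi(u_0+h)$; hence $u_0$ is a local $C^1(\overline\Omega)$-minimizer of $\varphi$, and Proposition~\ref{prop4} upgrades it to a local $W^{1,p}(\Omega)$-minimizer. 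The construction of $w_0\in -D_+$ via $\hat{\varphi}_-$ is entirely symmetric.
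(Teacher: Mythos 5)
Your proposal is correct and follows essentially the same route as the paper: direct minimization of the coercive truncated functionals $\hat{\varphi}_\pm$, nontriviality via testing along $t\hat{u}_1(q,\tilde{\xi}^+,\tilde{\beta})$ using $H(a)(iv)$ and $H_1(iv)$, the sign step with $h=-u_0^-$, nonlinear regularity plus the Pucci--Serrin maximum principle, and finally $\hat{\varphi}_+|_{C_+}=\varphi|_{C_+}$ together with Proposition \ref{prop4}. The only difference is one of detail: the paper first obtains $u_0\in L^\infty(\Omega)$ before applying Lieberman's theory, and it explicitly verifies the structural conditions needed for the Pucci--Serrin principle for the nonhomogeneous operator (the estimates (\ref{eq34})--(\ref{eq38})), steps you invoke by citation only.
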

\begin{proof}
	First we establish the existence of a positive solution.
	
	From Proposition \ref{prop7} we know that $\hat{\varphi}_+$ is coercive. Using the Sobolev embedding theorem and the compactness of the trace map, we see that $\hat{\varphi}_+$ is sequentially weakly lower semicontinuous. So, by the Weierstrass-Tonelli theorem, we know that we can find $u_0\in W^{1,p}(\Omega)$ such that
	\begin{equation}\label{eq25}
		\hat{\varphi}_+(u_0)=\inf[\hat{\varphi}_+(u):u\in W^{1,p}(\Omega)].
	\end{equation}
	
	Hypotheses $H(a)(iv)$ and $H_1(iv)$ imply that given $\epsilon>0$, we can find $\delta=\delta(\epsilon)\in\left(0,1\right]$ such that
	\begin{eqnarray}
		&&G(y)\leq\frac{1}{q}[\tilde{c}+\epsilon]\, |y|^q\ \mbox{for all}\ y\in\RR^N\ \mbox{with}\ |y|\leq\delta,\label{eq26}\\
		&&(\vartheta_0(z)-\epsilon)x^{q-1}\leq v\ \mbox{for almost all}\ z\in\Omega,\ \mbox{all}\ 0\leq x\leq\delta,\ \mbox{all}\ v\in\partial F(z,x).\label{eq27}
	\end{eqnarray}
	
	Recall that $\hat{u}_1(p,\tilde{\xi},\tilde{\beta})\in D_+$. Therefore we can find $t\in(0,1)$ small such that
	\begin{equation}\label{eq28}
		t\hat{u}_1(q,\tilde{\xi},\tilde{\beta})(z)\in[0,\delta]\ \mbox{and}\ t|D\hat{u}_1(q,\tilde{\xi}^+,\tilde{\beta})(z)|\leq\delta\ \mbox{for all}\ z\in\Omega.
	\end{equation}
	
	Then we have
	\begin{eqnarray}\label{eq29}
		\hat{\varphi}_+(t\hat{u}_1(q,\tilde{\xi},\tilde{\beta}))&\leq&\frac{t^q}{q}[\tilde{c}+\epsilon]||D\hat{u}_1(q,\tilde{\xi}^+,\tilde{\beta})||^q_q+\frac{1}{p}\int_{\Omega}\xi(z)|t\hat{u}_1(q,\tilde{\xi}^+,\tilde{\beta})|^pdz\nonumber\\
		&&+\frac{1}{p}\int_{\partial\Omega}\beta(z)|t\hat{u}_1(q,\tilde{\xi}^+,\tilde{\beta})|^pd\sigma-\frac{1}{q}\int_{\Omega}(\vartheta_0(z)-\epsilon)|t\hat{u}_1(q,\tilde{\xi}^+,\tilde{\beta})|^qdz\nonumber\\
		&&(\mbox{see (\ref{eq8}), (\ref{eq26}), (\ref{eq27}), (\ref{eq28})})\nonumber\\
		&\leq&\frac{t^q}{q}[\tilde{c}+\epsilon]||D\hat{u}_1(q,\tilde{\xi}^+,\tilde{\beta})||^q_q+\frac{t^q}{q}\int_{\Omega}\xi^+(z)\hat{u}_1(q,\tilde{\xi}^+,\tilde{\beta})^qdz\nonumber\\
		&&+\frac{t^q}{q}\int_{\partial\Omega}\beta(z)\hat{u}_1(q,\tilde{\xi}^+,\tilde{\beta})^qd\sigma-\frac{t^q}{q}\left[\int_{\Omega}\vartheta_0(z)\hat{u}_1(q,\tilde{\xi}^+,\tilde{\beta})^qdz-\epsilon\right]\nonumber\\
		&&(\mbox{recall that}\ 0<\delta\leq 1,\, q\leq p\ \mbox{and}\ ||\hat{u}_1(q,\tilde{\xi}^+,\tilde{\beta})||_q=1)\nonumber\\
		&\leq&\frac{t^q}{q}\tilde{c}\left[||D\hat{u}_1(q,\tilde{\xi}^+,\tilde{\beta})||^q_q+\int_{\Omega}\tilde{\xi}^+(z)\hat{u}_1(q,\tilde{\xi}^+,\tilde{\beta})^qdz\right.\nonumber\\
		&&\left.+\int_{\partial\Omega}\beta(z)\hat{u}_1(q,\tilde{\xi}^+,\tilde{\beta})^qd\sigma\right]\nonumber\\
		&&+\frac{t^q}{q}\epsilon[\hat{\lambda}_1(q,\tilde{\xi}^+,\tilde{\beta})+1]-\frac{t^q}{q}\int_{\Omega}\vartheta_0(z)\hat{u}_1(q,\tilde{\xi}^+,\tilde{\beta})dz\nonumber\\
		&=&\frac{t^q}{q}\left[\int_{\Omega}(\tilde{c}\hat{\lambda}_1
(q,\tilde{\xi}^+,\tilde{\beta})-\vartheta_0(z))\hat{u}_1(q,\tilde{\xi}^+,\tilde{\beta})dz+
\epsilon(\hat{\lambda}_1(q,\tilde{\xi}^+,\tilde{\beta})+1)\right].
	\end{eqnarray}
	
	By hypothesis $H_1(iv)$ and since $\hat{u}_1(q,\tilde{\xi}^+,\tilde{\beta})\in D_+$, we have
	$$\hat{\mu}^*=\int_{\Omega}(\vartheta_0(z)-\tilde{c}
\hat{\lambda}_1(q,\tilde{\xi}^+,\tilde{\beta}))\hat{u}_1(q,\tilde{\xi}^+,\tilde{\beta})dz>0.$$
	
	Then from (\ref{eq29}) we see that
	\begin{equation}\label{eq30}
		\hat{\varphi}_+(t\hat{u}_1(q,\tilde{\xi}^+,\tilde{\beta}))\leq
\frac{t^q}{q}[-\hat{\mu}^*+\epsilon(\hat{\lambda}_1(q,\tilde{\xi}^+,\tilde{\beta})+1)].
	\end{equation}
	
	Choosing $\epsilon\in(0,1)$ small,  we infer from (\ref{eq30}) that
	$\hat{\varphi}_+(t\hat{u}_1(q,\tilde{\xi}^+,\tilde{\beta}))<0$, hence
		$\hat{\varphi}_+(u_0)<0=\hat{\varphi}_+(0)$ (\mbox{see (\ref{eq25})}). We deduce that
		$u_0\neq 0$.
	
	By (\ref{eq25}) we have
	\begin{eqnarray}\label{eq31}
		&&0\in\partial\hat{\varphi}_+(u_0),\nonumber\\
		&\Rightarrow&\left\langle A(u_0),h\right\rangle+\int_{\Omega}(\xi(z)+\mu)|u_0|^{p-2}u_0hdz+\int_{\partial\Omega}\beta(z)|u_0|^{p-2}u_0hd\sigma=\int_{\Omega}\hat{v}hdz
	\end{eqnarray}
	for all $h\in W^{1,p}(\Omega)$, with $v(z)\in\partial\hat{F}_+(z,u_0(z))$ for almost all $z\in\Omega$ (see Clarke [14], Theorem 2.7.3, p. 80).
	
	In (\ref{eq31}) we choose $h=-u^-_0\in W^{1,p}(\Omega)$. We obtain
	\begin{eqnarray*}
		&&\frac{c_1}{p-1}||Du^-_0||^p_p+\int_{\Omega}(\xi(z)+\mu)(u^-_0)^pdz\leq 0\\
		&&(\mbox{see Lemma \ref{lem2}, hypothesis $H(\beta)$ and (\ref{eq9})}),\\
		&\Rightarrow&c_{10}||u^-_0||^p\leq 0\ \mbox{for some}\ c_{10}>0\ (\mbox{recall that}\ \mu>||\xi||_{\infty}),\\
		&\Rightarrow&u_0\geq 0,u_0\neq 0.
	\end{eqnarray*}
	
	From (\ref{eq31}), (\ref{eq9}), Stampacchia's theorem (see Gasinski and Papageorgiou [13], Remark 2.4.16, p. 195) and Papageorgiou and R\u adulescu [31], we have
	\begin{eqnarray}\label{eq32}
		&&\left\{\begin{array}{ll}
			-{\rm div}\, a(Du_0(z))+\xi(z)u_0(z)^{p-1}\in\partial F(z,u_0(z))&\mbox{for almost all}\ z\in\Omega,\\
		\displaystyle	\frac{\partial u}{\partial n_a}+\beta(z)u^{p-1}_0=0&\mbox{on}\ \partial\Omega,
		\end{array}\right\}\\
		&\Rightarrow&u_0\ \mbox{is a positive solution of (\ref{eq1})}.\nonumber
	\end{eqnarray}
	
	From (\ref{eq32}) and Papageorgiou and R\u adulescu [28], we have $u_0\in L^{\infty}(\Omega)$. So, by the nonlinear regularity theory of Lieberman [18] (p. 320), we have
	\begin{equation}\label{eq33}
		u_0\in C_+\backslash\{0\}.
	\end{equation}
	
	Hypotheses $H_1(i),(iv)$ imply that given $\rho>0$, we can find $\tilde{\mu}_{\rho}>0$ such that
	\begin{equation}\label{eq34}
		v+\tilde{\mu}_{\rho}x^{p-1}\geq 0\ \mbox{for almost all}\ z\in\Omega\ \mbox{and all}\ 0\leq x\leq \rho,\ \mbox{all}\ v\in\partial F(z,x).
	\end{equation}
	
	Now let $\rho=||u_0||_{\infty}$ and let $\tilde{\mu}_{\rho}>0$ as postulated by (\ref{eq34}). Then from (\ref{eq32}) and (\ref{eq34}) we have
	\begin{eqnarray}\label{eq35}
		&&-{\rm div}\, a(Du_0(z))+(\xi(z)+\tilde{\mu}_{\rho})u_0(z)^{p-1}\geq 0\ \mbox{for almost all}\ z\in\Omega,\nonumber\\
		&\Rightarrow&{\rm div}\, a(Du_0(z))\leq[||\xi||_{\infty}+\tilde{\mu}_{\rho}]u_0(z)^{p-1}\ \mbox{for almost all}\ z\in\Omega.
	\end{eqnarray}
	
	Let $\mu_0(t)=ta_0(t)$ for all $t>0$. We have
	\begin{eqnarray}\label{eq36}
		&&t\mu'_0(t)=t^2a'_0(t)+ta_0(t)\geq c_{11}t^{p-1}\ \mbox{for some}\ c_{11}>0\ \mbox{and all}\ t>0\\
		&&\mbox{(see hypotheses H(a)(i),(iii) and (\ref{eq4}))}.\nonumber
	\end{eqnarray}
	
	Integrating by parts, we obtain
	\begin{equation}\label{eq37}
		\int^t_0s\mu'_0(s)ds=t\mu_0(t)-\int^t_0\mu_0(s)ds
		=t^2a_0(t)-G_0(t)
		\geq\frac{c_{11}}{p}t^p\ (\mbox{see (\ref{eq36})}).
	\end{equation}
	
	Let $H(t)=t^2a_0(t)-G_0(t)$ and $H_0(t)=\frac{c_{11}}{p}t^p$ for all $t>0$. Pick $\delta\in(0,1)$ and $s>0$ and consider the sets
	$$C_1=\{t\in(0,1):H(t)\geq s\},\ C_2=\{t\in(0,1):H_0(t)\geq s\}.$$
	
	From (\ref{eq37}) we see that
	$C_2\subseteq C_1$, hence
		$\inf C_1\leq \inf C_2$.
	So, from Gasinski and Papageorgiou [29] (Proposition 1.55, p. 12),  we have
	$H^{-1}(s)\leq H^{-1}_0(s).$
	
	Then for $\mu^*_{\rho}=||\xi||_{\infty}+\tilde{\mu}_{\rho}$, we have
	\begin{equation}\label{eq38}
		\int^{\delta}_{0}\frac{1}{H^{-1}\left(\frac{\mu^*_{\rho}}{p}s^p\right)}ds
\geq\int^{\delta}_{0}\frac{1}{H^{-1}_0\left(\frac{\mu^*_{\rho}}{p}s^p\right)}ds=
\frac{\mu^*_{\rho}}{c_{11}}\int^{\delta}_0\frac{ds}{s}=+\infty.
	\end{equation}
	
	Inequalities (\ref{eq35}), (\ref{eq38}) permit the use of the nonlinear strong maximum principle of Pucci and Serrin [19], p. 111. We deduce that
	$u_0(z)>0\ \mbox{for all}\ z\in\Omega.$
	Invoking the boundary point of the theorem of Pucci and Serrin [19] (p. 120),  we obtain that
	$u_0\in D_+.$
	
	Note that $\hat{\varphi}_+|_{C_+}=\varphi|_{C_+}$ (see (\ref{eq8})). So, it follows that
	$u_0$ is a local $C^1(\overline{\Omega})$-minimizer of $\varphi$ (\mbox{see (\ref{eq25})}). By Proposition \ref{prop4}, we deduce that
		$u_0$ is a local $W^{1,p}(\Omega)$-minimizer of $\varphi$.
	
	Similarly, working with the functional $\hat{\varphi}_-$ we obtain a negative solution $w_0\in-D_+$ which is a local minimizer of $\varphi$.
\end{proof}

In fact, we can show that problem (\ref{eq1}) admits extremal constant sign solutions, that is, there are a smallest positive solution $\hat{u}_+\in D_+$ and a biggest negative solution $\hat{u}_-\in D_+$.

To this end, note that hypotheses $H_1(i),(iv)$ imply that given $\epsilon>0$ and $r\in(p,p^*)$ (recall $p^*=\left\{\begin{array}{ll}
	\frac{Np}{N-p}&\mbox{if}\ p<N\\
	+\infty&\mbox{if}\ N\leq p
\end{array}\right.$, the critical Sobolev exponent), we can find $c_{12}=c_{12}(\epsilon,r)>0$ such that
\begin{eqnarray}\label{eq39}
	&&vx\geq(\vartheta_0(z)-\epsilon)|x|^q-c_{12}|x|^r\\
	&&\mbox{for almost all}\ z\in\Omega\ \mbox{and all}\ x\in\RR,\ v\in\partial F(z,x).\nonumber
\end{eqnarray}

This unilateral growth estimate for $\partial F(z,\cdot)$ leads to the following auxiliary nonlinear Robin problem
\begin{eqnarray}\label{eq40}
	&&\left\{\begin{array}{l}
		-{\rm div}\, a(Du(z))+\xi^+(z)|u(z)|^{p-1}u(z)=(\vartheta_0(z)-\epsilon)|u(z)|^{q-2}u(z)-c_{12}|u(z)|^{r-2}u(z)\
		\mbox{in}\ \Omega,\\
	\displaystyle	\frac{\partial u}{\partial n_a}+\beta(z)|u|^{p-2}u=0\ \mbox{on}\ \partial\Omega.
	\end{array}\right\}
\end{eqnarray}
\begin{proposition}\label{prop10}
	If hypotheses $H(a),H(\xi),H(\beta)$ hold, then for $\epsilon>0$ small problem (\ref{eq40}) has a unique positive solution
	$$\tilde{u}^*\in D_+$$
	and since (\ref{eq40}) is odd, $\tilde{v}^*=-\tilde{v}^*\in-D_+$ is the unique negative solution of (\ref{eq40}).
\end{proposition}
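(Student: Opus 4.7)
My plan is to establish existence by direct minimization of a truncated energy functional, and then to prove uniqueness by a D\'iaz-Saa/convexity argument made possible by hypothesis $H(a)(iv)$. For existence of a positive solution, I consider $\psi_+:W^{1,p}(\Omega)\to\RR$ defined by
\[
\psi_+(u) = \int_\Omega G(Du)\,dz + \frac{1}{p}\int_\Omega\xi^+(z)|u|^p\,dz + \frac{1}{p}\int_{\partial\Omega}\beta(z)|u|^p\,d\sigma - \frac{1}{q}\int_\Omega(\vartheta_0(z)-\epsilon)(u^+)^q\,dz + \frac{c_{12}}{r}\int_\Omega(u^+)^r\,dz.
\]
Since $r>p\ge q$, Corollary~\ref{cor3} and the Sobolev embedding make $\psi_+$ coercive, and it is sequentially weakly lower semicontinuous, so a global minimizer $\tilde u^*\in W^{1,p}(\Omega)$ exists. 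To verify $\tilde u^*\ne 0$ I evaluate $\psi_+$ at $t\hat u_1(q,\tilde\xi^+,\tilde\beta)$ for small $t>0$, mimicking estimates (\ref{eq26})--(\ref{eq30}) of Proposition~\ref{prop9}: hypothesis $H_1(iv)$ supplies $\hat\mu^*>0$, so the $t^q$-term is strictly negative while the extra $O(t^r)$-contribution is negligible. Testing the Euler equation with $-(\tilde u^*)^-$ and using Lemma~\ref{lem2}(c) together with $\xi^+\ge 0$, $\beta\ge 0$ forces $(\tilde u^*)^-=0$, so $\tilde u^*$ is a nontrivial nonnegative weak solution of (\ref{eq40}); standard $L^\infty$-bounds (cf.\ [28]), the regularity theory of Lieberman [18], and the strong maximum principle/boundary point lemma of Pucci-Serrin [19] then upgrade $\tilde u^*$ to an element of $D_+$.

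The main obstacle is uniqueness, which I would establish by a convexity argument adapted to the nonhomogeneous operator. Define $j:L^1(\Omega)\to\RR\cup\{+\infty\}$ by
\[
j(w) = \int_\Omega G\bigl(Dw^{1/q}\bigr)\,dz + \frac{1}{p}\int_\Omega\xi^+(z)w^{p/q}\,dz + \frac{1}{p}\int_{\partial\Omega}\beta(z)w^{p/q}\,d\sigma
\]
when $w\ge 0$ and $w^{1/q}\in W^{1,p}(\Omega)$, and $j(w)=+\infty$ otherwise. Hypothesis $H(a)(iv)$ says precisely that $t\mapsto G_0(t^{1/q})$ is convex on $(0,\infty)$, and since $p\ge q$ the map $t\mapsto t^{p/q}$ is convex as well, so $j$ is convex on its effective domain. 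If $\tilde u_1,\tilde u_2\in D_+$ are two positive solutions of (\ref{eq40}) and $w_i=\tilde u_i^q$, a direct computation of the directional derivative of $j$ at $w_i$, combined with the weak formulation of (\ref{eq40}) for $\tilde u_i$ tested against $h/(q\tilde u_i^{q-1})\in W^{1,p}(\Omega)$ (an admissible test function since $\tilde u_i\in D_+$ is bounded below by a positive constant on $\overline{\Omega}$), gives
\[
j'(w_i;h) = \frac{1}{q}\int_\Omega\bigl[(\vartheta_0(z)-\epsilon) - c_{12}\tilde u_i^{r-q}\bigr]h\,dz.
\]
Summing the two convexity inequalities $j(w_j)-j(w_i)\ge j'(w_i;w_j-w_i)$ for $\{i,j\}=\{1,2\}$ cancels the $(\vartheta_0-\epsilon)$ contributions and leaves
\[
0 \ge \frac{c_{12}}{q}\int_\Omega\bigl(\tilde u_1^{r-q}-\tilde u_2^{r-q}\bigr)\bigl(\tilde u_1^q-\tilde u_2^q\bigr)\,dz.
\]
The right-hand integrand is pointwise nonnegative since $r-q>0$ and $q>0$, so it must vanish, forcing $\tilde u_1 = \tilde u_2$ on $\Omega$. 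The oddness of (\ref{eq40}) then yields immediately that $\tilde v^* = -\tilde u^*\in -D_+$ is the unique negative solution.

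The principal technical difficulty is making the convexity step rigorous: one must justify the chain-rule computation of $j'(w_i;\cdot)$ on elements associated to $D_+$ (for which $H(a)(iv)$ and the positive lower bound on $\tilde u_i$ are both essential), and verify that $h/(q\tilde u_i^{q-1})\in W^{1,p}(\Omega)$ for the choice $h = w_j-w_i$. This is where the membership $\tilde u_i\in D_+$ obtained in the existence step, rather than merely $\tilde u_i\in C_+\setminus\{0\}$, becomes indispensable.
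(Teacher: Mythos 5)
Your strategy coincides with the paper's in both halves: existence by direct minimization of a truncated functional, followed by nonlinear regularity and the Pucci--Serrin maximum principle to land in $D_+$, and uniqueness via the D\'iaz--Saa convexity device made possible by hypothesis $H(a)(iv)$, with the same functional $j$ and the same cancellation leading to $\int_\Omega(\tilde u_1^{r-q}-\tilde u_2^{r-q})(\tilde u_1^q-\tilde u_2^q)dz=0$ (the paper phrases this as monotonicity of $j'$ rather than summing two convexity inequalities, which is the same thing). Do note that the convexity of $w\mapsto\int_\Omega G(Dw^{1/q})dz$ does not follow from $H(a)(iv)$ alone: you also need the pointwise gradient inequality of D\'iaz--Saa, $|D[tw_1+(1-t)w_2]^{1/q}|\leq[t|Dw_1^{1/q}|^q+(1-t)|Dw_2^{1/q}|^q]^{1/q}$, together with the monotonicity of $G_0$; you name the D\'iaz--Saa argument, but the sentence ``so $j$ is convex'' elides this step, which the paper carries out explicitly.

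There is one genuine, though easily repaired, flaw in your existence step. Your $\psi_+$ replaces $\xi$ by $\xi^+$ but contains no penalization of the negative part, whereas the paper's functional carries the extra term $\frac{1}{p}\|u^-\|_p^p$. Hypotheses $H(\xi)$ and $H(\beta)$ allow $\xi\leq0$ a.e. (so $\xi^+\equiv0$) and $\beta\equiv0$ (Neumann case). In that admissible situation your $\psi_+$ is not coercive: along negative constants $u\equiv-n$ one has $\psi_+(u)=0$ while $\|u\|\rightarrow\infty$. Moreover, even granting a minimizer, testing the Euler equation with $-(\tilde u^*)^-$ then yields only $\frac{c_1}{p-1}\|D(\tilde u^*)^-\|_p^p\leq0$, i.e. $(\tilde u^*)^-$ is a nonnegative constant, not necessarily zero; your claim that $\xi^+\geq0$, $\beta\geq0$ ``forces $(\tilde u^*)^-=0$'' uses these terms with nonstrict sign only. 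Adding $\frac{1}{p}\|u^-\|_p^p$ to the truncated functional, as in the paper, restores both coercivity in the negative direction and the conclusion $(\tilde u^*)^-=0$; with that correction the rest of your argument, including the computation of $j'(u_i^q)(h)$ via the nonlinear Green's identity and the admissibility of the test directions thanks to $\tilde u_i\in D_+$, goes through exactly as in the paper.
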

\begin{proof}
	First, we prove the existence of a positive solution for problem (\ref{eq40}). For this purpose, we introduce the $C^1$-functional $\tilde{\psi}_+:W^{1,p}(\Omega)\rightarrow\RR$ defined by
	\begin{eqnarray*}
		&&\tilde{\psi}_+(u)=\int_{\Omega}G(Du)dz+\frac{1}{p}\int_{\Omega}\xi^+(z)|u|^pdz+\frac{1}{p}||u^-||^p_p+\frac{1}{p}\int_{\partial\Omega}\beta(z)|u|^pd\sigma\\
		&&-\frac{1}{q}\int_{\Omega}(\vartheta_0(z)-\epsilon)(u^+)^qdz+\frac{c_{12}}{r}||u^+||^r_r\ \mbox{for all}\ u\in W^{1,p}(\Omega).
	\end{eqnarray*}
	
	Using Corollary \ref{cor3}, we have
	\begin{eqnarray}\label{eq41}
		\tilde{\psi}_+(u)&\geq&\frac{c_1}{p(p-1)}||Du^-||^p_p+\frac{1}{p}\int_{\Omega}\xi^+(z)(u^-)^pdz+\frac{1}{p}||u^-||^p_p\nonumber\\
		&&+\frac{c_1}{p(p-1)}||Du^+||^p_p+\frac{1}{p}\int_{\Omega}\xi^+(z)(u^+)^pdz+\frac{c_{12}}{r}||u^+||^r_r-c_{13}||u^+||^q\\
		&&\mbox{for some}\ c_{13}>0\ (\mbox{since}\ \vartheta_0\in L^{\infty}(\Omega)).\nonumber
	\end{eqnarray}
	
	Recall that $q\leq p<r$. So, from (\ref{eq41}) it follows that $\tilde{\psi}_+$ is coercive. Also, it is sequentially weakly lower semicontinuous. So, by the Weierstrass-Tonelli theorem we can find $\tilde{u}^*\in W^{1,p}(\Omega)$ such that
	\begin{equation}\label{eq42}
		\tilde{\psi}_+(\tilde{u}^*)=\inf[\tilde{\psi}_+(u):u\in W^{1,p}(\Omega)].
	\end{equation}
	
	As in the proof of Proposition \ref{prop9}, and since $q\leq p<r$, we show that
	$\tilde{\psi}_+(\tilde{u}^*)<0=\tilde{\psi}_+(0)$, hence
		$\tilde{u}^*\neq 0$.
	
	Also, from (\ref{eq42}) we have
	\begin{eqnarray}\label{eq43}
		&&\tilde{\psi}'_+(\tilde{u}^*)=0,\nonumber\\
		&\Rightarrow&\left\langle A(\tilde{u}^*),h\right\rangle+\int_{\Omega}\xi^+(z)|\tilde{u}^*|^{p-2}\tilde{u}^*hdz-\int_{\Omega}((\tilde{u}^*)^-)^{p-1}hdz+\int_{\partial\Omega}\beta(z)((\tilde{u}^*)^+)^{p-1}hd\sigma\nonumber\\
		&&-\int_{\Omega}(\vartheta_0(z)-\epsilon)((\tilde{u}^*)^+)^{q-1}hdz+c_{12}\int_{\Omega}((\tilde{u}^*)^+)^{r-1}hdz=0\ \mbox{for all}\ h\in W^{1,p}(\Omega).
	\end{eqnarray}
	
	In (\ref{eq43}) we choose $h=-(\tilde{u}^*)^-\in W^{1,p}(\Omega)$. Then
	\begin{eqnarray*}
		&&\frac{c_1}{p-1}||D(\tilde{u}^*)^-||^p_p+||(\tilde{u}^*)^-||^p_p\leq 0\ (\mbox{see Lemma \ref{lem2}}),\\
		&\Rightarrow&\tilde{u}^*\geq 0,\ \tilde{u}^*\neq 0.
	\end{eqnarray*}
	
	So, equation (\ref{eq43}) becomes
	\begin{eqnarray*}
		&&\left\langle A(\tilde{u}^*),h\right\rangle+\int_{\Omega}\xi^+(z)(\tilde{u}^*)^{p-1}hdz+\int_{\partial\Omega}\beta(z)((\tilde{u}^*)^+)^{p-1}hd\sigma\\
		&&-\int_{\Omega}(\vartheta_0(z)-\epsilon)(\tilde{u}^*)^{q-1}hdz+c_{12}\int_{\Omega}(\tilde{u}^*)^{r-1}hdz=0\ \mbox{for all}\ h\in W^{1,p}(\Omega),\\
		&\Rightarrow&\tilde{u}^*\ \mbox{is a positive solution of problem (\ref{eq40})}.
	\end{eqnarray*}
	
	As before (see the proof of Proposition \ref{prop9}), using the nonlinear regularity theory and the nonlinear maximum principle, we have
	$\tilde{u}^*\in D_+.$
	
	Next, we show the uniqueness of this positive solution. To this end we introduce the integral functional $j:L^q(\Omega)\rightarrow\bar{\RR}=\RR\cup\{+\infty\}$ defined by
	$$j(u)=\left\{\begin{array}{ll}		\displaystyle\int_{\Omega}G(Du^{1/q})dz+\frac{1}{p}\int_{\Omega}\xi^+(z)u^{p/q}dz+\frac{1}{p}\int_{\partial\Omega}\beta(z)u^{p/q}d\sigma\ \mbox{if}\ u\geq 0,\ u^{1/q}\in W^{1,p}(\Omega)&\\
		+\infty\ \ \mbox{otherwise}.&
	\end{array}\right.$$
	
	Suppose that $u_1,u_2\in\{u\in L^q(\Omega):j(u)<\infty\}$ (the effective domain of $j(\cdot)$). Let $y_1=u_1^{1/q}$ and $y_2=u_2^{1/q}$. By definition $y_1,y_2\in W^{1,p}(\Omega)$. We set
	$y=[tu_1+(1-t)u_2]^{1/q}\ \mbox{with}\ t\in[0,1].$
	Then $y\in W^{1,p}(\Omega)$ and using Lemma 1 of Diaz and Saa [32], we have
	\begin{eqnarray*}
		&&|Dy(z)|\leq[t|Dy_1(z)|^q+(1-t)|Dy_2(z)|^q]^{1/q},\\
		&\Rightarrow&G_0(|Dy(z)|)\leq G_0([t|Dy_1(z)|^q+(1-t)|Dy_2(z)|^q]^{1/q})\\
		&&(\mbox{since}\ G_0(\cdot)\ \mbox{is increasing})\\
		&&\leq tG_0(|Dy_1(z)|)+(1-t)G_0(|Dy_2(z)|)\ \mbox{for almost all}\ z\in\Omega\\
		&&(\mbox{see hypothesis}\ H(a)(iv)),\\
		&\Rightarrow&G(Dy(z))\leq tG(Du_1(z)^{1/q})+(1-t)G(Du_2(z)^{1/q})\ \mbox{for almost all}\ z\in\Omega,\\
		&\Rightarrow&u\mapsto\int_{\Omega}G(Du^{1/q})dz\ \mbox{is convex}.
	\end{eqnarray*}
	
	Since $\xi^+\geq 0,\ \beta\geq 0$ (see hypothesis $H(\beta)$) and $q\leq p$, we see that
	$${\rm dom}\, j\ni u\mapsto\int_{\Omega}\xi^+(z)u^{p/q}dz+\int_{\partial\Omega}\beta(z)u^{p/q}d\sigma\ \mbox{is convex}.$$
	
	Therefore the integral functional $j(\cdot)$ is convex. By Fatou's lemma, it is also lower semicontinuous.
	
	Suppose that $u_1,u_2$ are two positive solutions of (\ref{eq40}). From the first part of the proof, we have
	$u_1,u_2\in D_+.$
	Then for $h\in C^1(\overline{\Omega})$ and  small $|t|>0$ we have
	$u_1^q+th,\ u_2^q+th\in {\rm dom}\, j.$
	
	Since $j(\cdot)$ is convex, we can easily check that $j(\cdot)$ is G\^ateaux differentiable at $u_1^q$ and at $u_2^q$ in the direction $h$. Moreover, using the nonlinear Green's identity (see Gasinski and Papageorgiou [13], p. 210), we have
	\begin{eqnarray*}
		&&j'(u_1^q)(h)=\frac{1}{q}\int_{\Omega}\frac{-{\rm div}\,a (Du_1)+\xi^+(z)u_1^{p-1}}{u_1^{q-1}}hdz\\
		&&j'(u_2^q)(h)=\frac{1}{q}\int_{\Omega}\frac{-{\rm div}\,a (Du_2)+\xi^+(z)u_2^{p-1}}{u_2^{q-1}}hdz\ \mbox{for all}\ h\in C^1(\overline{\Omega}).
	\end{eqnarray*}
	
	Since $j(\cdot)$ is convex, $j'(\cdot)$ is monotone. Therefore
	\begin{eqnarray*}
		0&\leq&\int_{\Omega}\left(\frac{-{\rm div}\,a(Du_1)+\xi^+(z)u_1^{q-1}}{u_1^{q-1}}-\frac{-{\rm div}\, a(Du_2)+\xi^+(z)u_2^{q-1}}{u_2^{q-1}}\right)(u_1^q-u_2^q)dz\\
		&=&c_{12}\int_{\Omega}(u_2^{r-q}-u_1^{r-q})(u_1^q-u_2^q)dz\leq 0\ (\mbox{recall}\ r>q),\\
		\Rightarrow&&u_1=u_2.
	\end{eqnarray*}
	
	This proves the uniqueness of the positive solution $\tilde{u}^*\in D_+$ of (\ref{eq40}).
	
	Problem (\ref{eq40}) is odd. Therefore $\tilde{v}^*=-\tilde{u}^*\in -D_+$ is the unique negative solution of (\ref{eq40}).
\end{proof}

Let $S_+$ be the set of positive solutions of (\ref{eq1}) and $S_-$ the set of negative solutions of (\ref{eq1}). From Proposition \ref{prop9} and its proof, we know that
$S_+\neq\emptyset$, $S_+\subseteq D_+$ and $S_-\neq\emptyset,\ S_-\subseteq-D_+.$
Moreover, as in Filippakis and Papageorgiou [33] (see also Papageorgiou, R\u adulescu and Repov\v s [34]), we have that
\begin{center}
$S_+$ is downward directed (that is, if $u_1,u_2\in S_+$, then we can find $u\in S_+$ such that $u\leq u_1,\ u\leq u_2$);\\
$S_-$ is upward directed (that is, if $v_1,v_2\in S_-$, then we can find $v\in S_-$ such that $v_1\leq v,\ v_2\leq v$).
\end{center}
\begin{proposition}\label{prop11}
	If hypotheses $H(a),H(\xi), H(\beta),H_1$ hold, then $\tilde{u}^*\leq u$ for all $u\in S_+$ and $v\leq\tilde{v}^*$ for all $v\in S_-$.
\end{proposition}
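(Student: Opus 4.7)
The plan is to locate $\tilde{u}^*$ inside the order interval $[0,u]$ for each $u\in S_+$ by minimizing an order-truncated version of the energy functional of the auxiliary problem (\ref{eq40}); the uniqueness from Proposition \ref{prop10} will then identify the minimizer with $\tilde{u}^*$ and yield $\tilde{u}^*\leq u$. Fix $u\in S_+$, so $u\in D_+$ by Proposition \ref{prop9}. Define $e_+(z,x)$ to equal $0$ for $x\leq 0$, $(\vartheta_0(z)-\epsilon)x^{q-1}-c_{12}x^{r-1}$ for $0<x\leq u(z)$, and the value at $u(z)$ for $x>u(z)$; let $E_+(z,x)$ denote its primitive. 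Following the pattern of Proposition \ref{prop10}, I would form the $C^1$-functional
$$\hat{\psi}_+(y)=\int_\Omega G(Dy)dz+\frac{1}{p}\int_\Omega\xi^+(z)|y|^pdz+\frac{1}{p}\|y^-\|_p^p+\frac{1}{p}\int_{\partial\Omega}\beta(z)|y|^pd\sigma-\int_\Omega E_+(z,y)dz.$$
Since $E_+$ is bounded above in $x$ (thanks to the capping), Corollary \ref{cor3} makes $\hat{\psi}_+$ coercive; it is also sequentially weakly lower semicontinuous, so Weierstrass-Tonelli yields a global minimizer $\tilde{u}$. To see $\tilde{u}\neq 0$, pick $t>0$ small enough that $t\hat{u}_1(q,\tilde{\xi}^+,\tilde{\beta})\leq u$ on $\overline{\Omega}$ (possible because both $u$ and $\hat{u}_1(q,\tilde{\xi}^+,\tilde{\beta})$ lie in $D_+$); on this test element the truncation is inactive, so the small-$t$ computation of Proposition \ref{prop10} applies verbatim to give $\hat{\psi}_+(t\hat{u}_1(q,\tilde{\xi}^+,\tilde{\beta}))<0=\hat{\psi}_+(0)$. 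The bound $\tilde{u}\geq 0$ follows by testing $\hat{\psi}_+'(\tilde{u})=0$ against $-\tilde{u}^-$ and invoking Lemma \ref{lem2}.

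The main obstacle is the upper comparison $\tilde{u}\leq u$. Using a measurable selection $\hat{v}_u(z)\in\partial F(z,u(z))$ that witnesses (\ref{eq32}) for $u$, I would test both $\hat{\psi}_+'(\tilde{u})=0$ and the weak form for $u$ against $h=(\tilde{u}-u)^+\in W^{1,p}(\Omega)$, then subtract to obtain
\begin{align*}
&\langle A(\tilde{u})-A(u),(\tilde{u}-u)^+\rangle+\int_\Omega[\xi^+(z)\tilde{u}^{p-1}-\xi(z)u^{p-1}](\tilde{u}-u)^+dz\\
&\quad+\int_{\partial\Omega}\beta(z)[\tilde{u}^{p-1}-u^{p-1}](\tilde{u}-u)^+d\sigma=\int_\Omega[e_+(z,\tilde{u})-\hat{v}_u(z)](\tilde{u}-u)^+dz.
\end{align*}
On $\{\tilde{u}>u\}$ the truncation gives $e_+(z,\tilde{u})=(\vartheta_0(z)-\epsilon)u^{q-1}-c_{12}u^{r-1}$, while applying (\ref{eq39}) at $x=u(z)>0$ and dividing by $u$ yields $\hat{v}_u(z)\geq(\vartheta_0(z)-\epsilon)u^{q-1}-c_{12}u^{r-1}$; hence the right-hand side is $\leq 0$. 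The left-hand side is $\geq 0$ by the monotonicity of $A$ (Proposition \ref{prop5}), by $\beta\geq 0$, and by the elementary inequality $\xi^+(z)t^{p-1}\geq\xi(z)s^{p-1}$ valid for $t>s>0$ (trivial for $\xi(z)\geq 0$, and because the left side vanishes while the right is negative when $\xi(z)<0$). Therefore $|\{\tilde{u}>u\}|=0$, i.e., $\tilde{u}\leq u$.

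Having placed $\tilde{u}\in[0,u]$, the truncation defining $e_+$ is inactive, so $\tilde{u}$ is a positive solution of (\ref{eq40}); the uniqueness in Proposition \ref{prop10} then forces $\tilde{u}=\tilde{u}^*$ and proves $\tilde{u}^*\leq u$. The second assertion $v\leq\tilde{v}^*$ for $v\in S_-$ is obtained by the symmetric construction: truncate the reaction of (\ref{eq40}) between $v(z)$ (from below) and $0$ (from above), use $-t\hat{u}_1(q,\tilde{\xi}^+,\tilde{\beta})$ with small $t>0$ to ensure nontriviality, and invoke the oddness of (\ref{eq40}) together with the uniqueness of $\tilde{v}^*$ recorded in Proposition \ref{prop10}.
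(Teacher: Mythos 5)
Your overall strategy is the same as the paper's (truncate the auxiliary reaction at the fixed $u\in S_+$, minimize the truncated functional, push the minimizer into $[0,u]$, and then invoke the uniqueness of Proposition \ref{prop10}), but you dropped the one device the paper's proof actually needs, and this creates a real gap. The paper's truncated functional uses the weight $\xi^+(z)+1$ and adds the capped term $x^{p-1}\wedge u(z)^{p-1}$ to the truncated reaction; you use only $\xi^+$ and cap only the auxiliary reaction. Your justification of coercivity -- ``$E_+$ is bounded above in $x$ thanks to the capping'' -- is false: for $x>u(z)$ the integrand is the constant $e_+(z,u(z))=(\vartheta_0(z)-\epsilon)u(z)^{q-1}-c_{12}u(z)^{r-1}$, which is \emph{positive} wherever $u(z)$ is small, so $E_+(z,\cdot)$ grows linearly upward there. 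Linear growth is harmless only if some zeroth-order term controls $\|y^+\|_p$; but hypotheses $H(\xi)$, $H(\beta)$ allow $\xi\leq 0$ (so $\xi^+\equiv 0$) and $\beta\equiv 0$, in which case your $\hat{\psi}_+$ evaluated on positive constants $y\equiv t$ equals $-\int_\Omega E_+(z,t)\,dz$, which can tend to $-\infty$ as $t\to+\infty$. So the Weierstrass--Tonelli step is not justified as written.

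The same degeneracy also undermines your comparison step. From ``LHS $\geq 0$, RHS $\leq 0$'' you conclude $|\{\tilde{u}>u\}|=0$, but all you can extract from the vanishing of the left-hand side is $(a(D\tilde{u})-a(Du),D\tilde{u}-Du)_{\RR^N}=0$ a.e.\ on $\{\tilde{u}>u\}$, hence $D(\tilde{u}-u)^+=0$, i.e.\ $(\tilde{u}-u)^+$ is a nonnegative constant; when $\xi^+\equiv0$ and $\beta\equiv0$ the remaining terms do not rule out a positive constant. The paper's extra ``$+1$'' (with the matching capped $x^{p-1}$ in the truncation) is exactly what repairs both points: it restores coercivity (the $\frac1p\|y\|_p^p$ growth beats the linear growth of the capped primitive) and it makes the zeroth-order comparison term strictly monotone, so its vanishing forces $(\tilde{u}-u)^+=0$ directly. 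With that modification your argument coincides with the paper's proof; without it, the two steps above do not go through under the stated hypotheses.
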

\begin{proof}
	Let $u\in S_+$ and consider the following Carath\'eodory function
	\begin{eqnarray}\label{eq44}
		\tau_+(z,x)=\left\{\begin{array}{ll}
			0&\mbox{if}\ x<0\\
			(\vartheta_0(z)-\epsilon)x^{q-1}-c_{12}x^{r-1}+x^{p-1}&\mbox{if}\ 0\leq x\leq u(z)\\
			(\vartheta_0(z)-\epsilon)u(z)^{q-1}-c_{12}u(z)^{q-1}+u(z)^{p-1}&\mbox{if}\ u(z)<x.
		\end{array}\right.
	\end{eqnarray}
	
	Let $T_+(z,x)=\int^x_0\tau_+(z,s)ds$ and consider the $C^1$-functional $\chi_+:W^{1,p}(\Omega)\rightarrow\RR$ defined by
	$$\chi_+(u)=\int_{\Omega}G(Du)dz+\frac{1}{p}\int_{\Omega}(\xi^+(z)+1)|u|^{p}dz+\frac{1}{p}
\int_{\partial\Omega}\beta(z)|u|^pd\sigma-\int_{\Omega}T_+(z,u)dz\
		\mbox{for all}\ u\in W^{1,p}(\Omega).$$
	
	Corollary \ref{cor3} and (\ref{eq44}) imply that $\chi_+$ is coercive. Also, it is sequentially weakly lower semicontinuous. So, by the Weierstrass-Tonelli theorem we can find $\hat{u}^*\in W^{1,p}(\Omega)$ such that
	\begin{equation}\label{eq45}
		\chi_+(\hat{u}^*)=\inf[\chi_+(u):u\in W^{1,p}(\Omega)].
	\end{equation}
	
	As in the proof of Proposition \ref{prop9}, using hypothesis $H_1(iv)$, we see that
	$\chi_+(\hat{u}^*)<0=\chi_+(0)$, hence
		$\hat{u}^*\neq 0$.
	
	From (\ref{eq45}) we have
	\begin{eqnarray}\label{eq46}
		&&\chi'_+(\hat{u}^*)=0,\nonumber\\
		&\Rightarrow&\left\langle A(\hat{u}^*),h\right\rangle+\int_{\Omega}(\xi^+(z)+1)|\hat{u}^*|^{p-2}\hat{u}^*hdz+\int_{\partial\Omega}\beta(z)|\hat{u}^*|^{p-2}\hat{u}^*hd\sigma\nonumber\\
		&&=\int_{\Omega}\tau_+(z,\hat{u}^*)hdz\ \mbox{for all}\ h\in W^{1,p}(\Omega).
	\end{eqnarray}
	
	In (\ref{eq46}) we first choose $h=-(\hat{u}^*)^-\in W^{1,p}(\Omega)$. Using Lemma \ref{lem2} and (\ref{eq44}), we obtain
	\begin{eqnarray*}
		&&\frac{c_1}{p-1}||D(\hat{u}^*)^-||^p_p+||(\hat{u}^*)^-||^p_p\leq 0,\\
		&\Rightarrow&\hat{u}^*\geq 0,\ \hat{u}^*\neq 0.
	\end{eqnarray*}
	
	Next, in (\ref{eq46}) we choose $h=(\hat{u}^*-u)^+\in W^{1,p}(\Omega)$. Using (\ref{eq44}), we obtain
	\begin{eqnarray}\label{eq47}
		&&\left\langle A(\hat{u}^*),(\hat{u}^*-u)^+\right\rangle+\int_{\Omega}(\xi^+(z)+1)(\hat{u}^*)^{p-1}(\hat{u}^*-u)^+dz\nonumber\\
		&&+\int_{\partial\Omega}\beta(z)(\hat{u}^*)^{p-1}(\hat{u}^*-u)^+hd\sigma\nonumber\\
		&&=\int_{\Omega}(\vartheta_0(z)-\epsilon)u^{q-1}(\hat{u}^*-u)^+dz-c_{12}\int_{\Omega}u^{r-1}(\hat{u}^*-u)^+dz+\int_{\Omega}u^{p-1}(\hat{u}^*-u)^+dz.
	\end{eqnarray}
	
	Since $u\in S_+$, we can find $v\in L^{p'}(\Omega)\ \left(\frac{1}{p}+\frac{1}{p'}=1\right)$ such that
	\begin{eqnarray}\label{eq48}
		\left\{\begin{array}{l}
			v(z)\in\partial F(z,u(z))\ \mbox{for almost all}\ z\in\Omega,\\
			-{\rm div}\,a(Du(z))+\xi(z)u(z)^{p-1}=v(z)\ \mbox{for almost all}\ z\in\Omega,\\
	\displaystyle		\frac{\partial u}{\partial n_a}+\beta(z)u^{p-1}=0\ \mbox{on}\ \partial\Omega.
		\end{array}\right\}
	\end{eqnarray}
	
	Using (\ref{eq39}), (\ref{eq47}), (\ref{eq48}), we have
	\begin{eqnarray*}
		&&\left\langle A(\hat{u}^*),(\hat{u}^*-u)^+\right\rangle+\int_{\Omega}(\xi^+(z)+1)(\hat{u}^*)^{p-1}(\hat{u}^*-u)^+dz+\\
		&&\int_{\partial\Omega}\beta(z)(\hat{u}^*)^{p-1}(\hat{u}^*-u)^+d\sigma\leq\int_{\Omega}v(\hat{u}^*-u)^+dz+\int_{\Omega}u^{p-1}(\hat{u}^*-u)^+dz\\
		&&=\left\langle A(u),(\hat{u}^*-u)^+\right\rangle+\int_{\Omega}(\xi(z)+1)u^{p-1}(\hat{u}^*-u)^+dz+\int_{\partial\Omega}\beta(z)u^{p-1}(\hat{u}^*-u)^+d\sigma\\
		&&(\mbox{since}\ u\in S_+)\\
		&&\leq\left\langle A(u),(\hat{u}^*-u)^+\right\rangle+\int_{\Omega}(\xi^+(z)+1)u^{p-1}(\hat{u}^*-u)^+dz+\int_{\partial\Omega}\beta(z)u^{p-1}(\hat{u}^*-u)^+d\sigma,\\
		&\Rightarrow&\left\langle A(\hat{u}^*)-A(u),(\hat{u}^*-u)^+\right\rangle+\int_{\Omega}(\xi^+(z)+1)((\hat{u}^*)^{p-1}-u^{p-1})(\hat{u}^*-u)^+dz\\
		&&+\int_{\partial\Omega}\beta(z)((\hat{u}^*)^{p-1}-u^{p-1})(\hat{u}^*-u)^+d\sigma\leq 0,\\
		&\Rightarrow&\hat{u}^*\leq u\ (\mbox{see Lemma \ref{lem2} and hypothesis}\ H(\beta)).
	\end{eqnarray*}
	
	Therefore we have proved that
	\begin{equation}\label{eq49}
		\hat{u}^*\in[0,u],\ \hat{u}^*\neq 0.
	\end{equation}
	
	It follows from (\ref{eq44}) and (\ref{eq49}) that
	$\hat{u}^*\ \mbox{is a positive solution of (\ref{eq40})}$, hence
		$\hat{u}^*=\tilde{u}^*\in D_+$ (\mbox{see Proposition \ref{prop10}}). We conclude that
		$\tilde{u}^*\leq u$ for all $u\in S_+$.
	A similar argument shows that
	$v\leq\tilde{v}^*$ for all $v\in S_-.$
\end{proof}

Now we can produce extremal constant sign solutions for problem (\ref{eq1}). These solutions play an important role in the argument of Section 4, where we establish the existence of nodal solutions.
\begin{proposition}\label{prop12}
	If hypotheses $H(a),H(\xi),H(\beta),H_1$ hold, then problem (\ref{eq1}) admits a smallest positive solution $\hat{u}_+\in D_+$ and a biggest negative solution $\hat{u}_-\in D_+$.
\end{proposition}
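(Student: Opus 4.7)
The plan is to construct the smallest positive solution $\hat{u}_+$ as the pointwise limit of a suitable decreasing minimizing chain in $S_+$ and then argue that this limit is again a solution, staying strictly away from zero thanks to Proposition \ref{prop11}. The construction for $\hat{u}_-$ is entirely symmetric.

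First, I would invoke the classical result of Hu--Papageorgiou on downward directed families in Banach lattices: since $S_+$ is downward directed, there exists a decreasing sequence $\{u_n\}_{n\geq 1}\subseteq S_+$ with $\inf_{n\geq 1} u_n=\inf S_+$ (pointwise almost everywhere). By Proposition \ref{prop11} we have $\tilde{u}^*\leq u_n\leq u_1$ for all $n\in\NN$, so the sequence is bounded in $L^\infty(\Omega)$ and, through Lemma \ref{lem2}(c) applied to the equations satisfied by $u_n$, bounded in $W^{1,p}(\Omega)$ as well. Therefore, along a subsequence, $u_n\xrightarrow{w}\hat{u}_+$ in $W^{1,p}(\Omega)$ and $u_n\rightarrow\hat{u}_+$ in $L^p(\Omega)$ and $L^p(\partial\Omega)$, and by monotone convergence $u_n(z)\downarrow\hat{u}_+(z)$ almost everywhere.

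Next, for each $n$ choose $v_n\in L^{p'}(\Omega)$ with $v_n(z)\in\partial F(z,u_n(z))$ almost everywhere and
\[
\langle A(u_n),h\rangle+\int_\Omega \xi(z) u_n^{p-1} h\,dz+\int_{\partial\Omega}\beta(z)u_n^{p-1}h\,d\sigma=\int_\Omega v_n h\,dz\quad\text{for all }h\in W^{1,p}(\Omega).
\]
Hypothesis $H_1(i)$ together with the $L^\infty$-bound on $\{u_n\}$ yields that $\{v_n\}$ is bounded in $L^{p'}(\Omega)$, so, passing to a further subsequence, $v_n\xrightarrow{w}\hat{v}$ in $L^{p'}(\Omega)$. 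Testing the equation above with $h=u_n-\hat{u}_+$ and using the strong convergence in $L^p(\Omega)$ and $L^p(\partial\Omega)$, one obtains $\limsup_{n\to\infty}\langle A(u_n),u_n-\hat{u}_+\rangle\leq 0$, so the $(S)_+$ property of $A$ (Proposition \ref{prop5}) gives $u_n\rightarrow\hat{u}_+$ in $W^{1,p}(\Omega)$. Passing to the limit in the equation then yields
\[
\langle A(\hat{u}_+),h\rangle+\int_\Omega\xi(z)\hat{u}_+^{p-1}h\,dz+\int_{\partial\Omega}\beta(z)\hat{u}_+^{p-1}h\,d\sigma=\int_\Omega \hat{v}h\,dz\quad\text{for all }h\in W^{1,p}(\Omega).
\]

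The step I expect to be slightly delicate is showing that $\hat{v}(z)\in\partial F(z,\hat{u}_+(z))$ almost everywhere, which upgrades $\hat{u}_+$ from a mere distributional identity to an actual solution of \eqref{eq1}. For this I would use a standard Aubin--Cellina / Mazur-type convergence argument: the graph of the multifunction $x\mapsto\partial F(z,x)$ is closed in $\RR\times\RR_{w^*}$ by upper semicontinuity of the Clarke subdifferential, and $u_n\to\hat{u}_+$ almost everywhere; combined with the weak $L^{p'}$-convergence of $v_n$ to $\hat{v}$, a convex-combination argument (pass to convex combinations $\bar v_n\in\mathrm{conv}\{v_k:k\geq n\}$ that converge strongly to $\hat{v}$ in $L^{p'}$, then to a subsequence converging a.e., and use that $\partial F(z,\cdot)$ is convex-valued) yields $\hat{v}(z)\in\partial F(z,\hat{u}_+(z))$ almost everywhere. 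Thus $\hat{u}_+$ solves \eqref{eq1}.

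Finally, Proposition \ref{prop11} gives $\tilde{u}^*\leq u_n$ for every $n$, whence $\tilde{u}^*\leq\hat{u}_+$; in particular $\hat{u}_+\neq 0$, and the nonlinear regularity theory of Lieberman \cite[18]{} together with the maximum principle of Pucci--Serrin (applied exactly as in the proof of Proposition \ref{prop9}) places $\hat{u}_+\in D_+$. By construction $\hat{u}_+\leq u$ for every $u\in S_+$, so $\hat{u}_+$ is the smallest positive solution of \eqref{eq1}. The same argument with $S_-$ upward directed and $\tilde{v}^*\in-D_+$ as the upper barrier produces the biggest negative solution $\hat{u}_-\in -D_+$.
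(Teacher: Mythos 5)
Your proposal is correct and follows essentially the same route as the paper: a decreasing sequence obtained from the downward directedness of $S_+$ (Hu--Papageorgiou), bounds from Proposition \ref{prop11} and hypothesis $H_1(i)$, the $(S)_+$ property of $A$ to upgrade to strong $W^{1,p}(\Omega)$-convergence, passage to the limit in the weak formulation, and the lower barrier $\tilde{u}^*$ to exclude the zero limit. The only difference is cosmetic: where you carry out the Mazur convex-combination argument by hand to get $\hat{v}(z)\in\partial F(z,\hat{u}_+(z))$ for almost all $z\in\Omega$, the paper cites Proposition 3.9 of Hu and Papageorgiou [35], which encapsulates exactly that argument.
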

\begin{proof}
	Recalling that $S_+$ is downward directed and using Lemma 3.10 of Hu and Papageorgiou [35] (p. 178), we can find a decreasing sequence $\{u_n\}_{n\geq 1}\subseteq S_+$ such that
	$\inf\limits_{n\geq 1}u_n=\inf S_+.$
	Evidently, $\{u_n\}_{n\geq 1}\subseteq W^{1,p}(\Omega)$ is bounded. So, we may assume that
	\begin{equation}\label{eq50}
		u_n\stackrel{w}{\rightarrow}\hat{u}_+\ \mbox{in}\ W^{1,p}(\Omega)\ \mbox{and}\ u_n\rightarrow\hat{u}_+\ \mbox{in}\ L^p(\Omega)\ \mbox{and in}\ L^p(\partial\Omega).
	\end{equation}
	
	For every $n\in\NN$, we can find $v_n\in L^{p'}(\Omega)$ such that
	\begin{eqnarray}
		&&v_n(z)\in\partial F(z,u_n(z))\ \mbox{for almost all}\ z\in\Omega\label{eq51}\\
		&&\left\langle A(u_n),h\right\rangle+\int_{\Omega}\xi(z)u_n^{p-1}hdz+\int_{\partial\Omega}\beta(z)u_n^{p-1}hd\sigma=\int_{\Omega}v_nhdz\ \mbox{for all}\ h\in W^{1,p}(\Omega).\label{eq52}
	\end{eqnarray}
	
	From (\ref{eq51}), (\ref{eq50}) and hypothesis $H_1(i)$, we see that $\{v_n\}_{n\geq 1}\subseteq L^{p'}(\Omega)$ is bounded. So, we may also assume that
	\begin{equation}\label{eq53}
		v_n\stackrel{w}{\rightarrow}\hat{v}\ \mbox{in}\ L^{p'}(\Omega)\ \mbox{as}\ n\rightarrow\infty.
	\end{equation}
	
	From (\ref{eq50}), (\ref{eq53}) and Proposition 3.9 of Hu and Papageorgiou [35] (p. 694), we have
	\begin{equation}\label{eq54}
		\hat{v}(z)\in\partial F(z,\hat{u}_+(z))\ \mbox{for almost all}\ z\in\Omega.
	\end{equation}
	
	In (\ref{eq52}) we choose $h=u_n-\hat{u}_+\in W^{1,p}(\Omega)$, pass to the limit as $n\rightarrow\infty$ and use (\ref{eq50}), (\ref{eq53}). Then we obtain
	\begin{eqnarray}\label{eq55}
		&&\lim\limits_{n\rightarrow\infty}\left\langle A(u_n),u_n-\hat{u}_+\right\rangle=0,\nonumber\\
		&\Rightarrow&u_n\rightarrow\hat{u}_+\ \mbox{in}\ W^{1,p}(\Omega)\ (\mbox{see Proposition \ref{prop5}}).
	\end{eqnarray}
	
	From Proposition \ref{prop11} we have
	\begin{eqnarray}\label{eq56}
		&&\tilde{u}^*\leq u_n\ \mbox{for all}\ n\in\NN\nonumber\\
		&\Rightarrow&\tilde{u}^*\leq\hat{u}_+.
	\end{eqnarray}
	
	In (\ref{eq52}) we pass to the limit as $n\rightarrow\infty$ and use (\ref{eq53}) and (\ref{eq55}). Then
	\begin{eqnarray*}
		&&\left\langle A(\hat{u}_+),h\right\rangle+\int_{\Omega}\xi(z)\hat{u}_+^{p-1}hdz+\int_{\partial\Omega}\beta(z)\hat{u}_+^{p-1}hd\sigma=\int_{\Omega}\hat{v}hdz\ \mbox{for all}\ h\in W^{1,p}(\Omega),\\
		&\Rightarrow&\hat{u}_+\in S_+\ \mbox{(see (\ref{eq54}), (\ref{eq56})) and}\ \hat{u}_+=\inf S_+.
	\end{eqnarray*}
	
	Similarly, we  {can} produce  $\hat{u}_-\in S_-$ such that $\hat{u}_-=\sup S_+.$
\end{proof}

\section{Nodal Solutions}

In this section we produce a nodal (sign changing) solution for problem (\ref{eq1}). The idea is simple. Having the extremal constant sign solutions $\hat{u}_{\pm}$ (see Proposition \ref{prop12}), by suitable truncation-perturbation techniques we restrict ourselves to the order interval $[\hat{u}_-,\hat{u}_+]$. Using variational arguments (Theorem \ref{th1}) and Proposition \ref{prop6}, we show that problem (\ref{eq1}) has a solution $y_0\in[\hat{u}_-,\hat{u}_+]\backslash\{0,\hat{u}_{\pm}\}$. The extremality of $\hat{u}_{\pm}$ implies that $y_0$ is nodal. For this strategy to work, we need to strengthen the hypotheses on the primitive $F(z,x)$. So,  {we now} assume that
$F(z,x)=\int^x_0f(z,s)ds$,
with the integrand $f(z,s)$ being measurable with possible jump discontinuities in $s\in\RR$. We set
$$f_l(z,x)=\liminf\limits_{x'\rightarrow x}f(z,x')\ \mbox{and}\ f_u(z,x)=\limsup\limits_{x'\rightarrow x} f(z,x').$$

The precise hypotheses on the integrand $f(z,x)$ are the following;

\smallskip
$H_2:$ $f:\Omega\times\RR\rightarrow\RR$ is a measurable function, for almost all $z\in\Omega$, $f(z,\cdot)$ is continuous at $x=0$, it has only jump discontinuities and
\begin{itemize}
	\item[(i)] $|f(z,x)|\leq a(z)(1+|x|^{p-1})$ for almost all $z\in\Omega$\ \mbox{and all}\ $x\in\RR$, with $a\in L^{\infty}(\Omega)$;
	\item[(ii)] $\limsup\limits_{x\rightarrow\pm\infty}\frac{f_u(z,x)}{|x|^{p-2}x}\leq\frac{c_1}{p-1}\hat{\lambda}_1(p,\hat{\xi},\hat{\beta})$ uniformly for almost all $z\in\Omega$;
	\item[(iii)] $\lim\limits_{x\rightarrow+\infty}[f_l(z,x)x-pF(z,x)]=\lim\limits_{x\rightarrow-\infty}[f_u(z,x)x-pF(z,x)]=+\infty$ uniformly for almost all $z\in\Omega$;
	\item[(iv)] there exists $c^*>\tilde{c}\hat{\lambda}_2(q,\tilde{\xi}^+,\tilde{\beta})$ such that
	$$c^*\leq\liminf\limits_{x\rightarrow 0^+}\frac{f_l(z,x)}{x^{p-1}},\ \liminf\limits_{x\rightarrow 0^-}\frac{f_u(z,x)}{|x|^{p-2}x}\ \mbox{uniformly for almost all}\ z\in\Omega.$$
\end{itemize}
\begin{remark}
	The above hypotheses imply that the primitive
	$F(z,x)=\int^x_0f(z,s)ds$
	is a locally Lipschitz integrand and
	$\partial F(z,x)=[f_l(z,x),f_u(z,x)]$
	(see Clarke [14], p. 34  and Chang [15]). Therefore hypotheses $H_2$ are a more restrictive version of hypotheses $H_1$ used in Section 3.
\end{remark}
	
	Using these new stronger conditions on the reaction term, we now prove the existence of nodal (that is, sign changing) solutions.
\begin{proposition}\label{prop13}
	If hypotheses $H(a),H(\xi),H(\beta),H_2$ hold, then problem (\ref{eq1}) admits a nodal solution
	$$y_0\in[\hat{u}_-,\hat{u}_+]\cap C^1(\overline{\Omega}).$$
\end{proposition}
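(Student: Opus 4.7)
The plan is to reduce problem~(\ref{eq1}) to the order interval $[\hat u_-,\hat u_+]$ via truncation and perturbation, identify $\hat u_+$ and $\hat u_-$ as two distinct local $W^{1,p}(\Omega)$-minimizers of the truncated Lipschitz energy, and then apply the nonsmooth mountain pass theorem (Theorem~\ref{th1}) between them, with Proposition~\ref{prop6} providing the key estimate that forces the mountain pass level strictly below~$0$. Concretely, fix $\mu>\|\xi\|_\infty$ and set
\[
\hat f(z,x)=\begin{cases}f(z,\hat u_-(z))+\mu|\hat u_-(z)|^{p-2}\hat u_-(z), & x<\hat u_-(z),\\ f(z,x)+\mu|x|^{p-2}x, & \hat u_-(z)\le x\le\hat u_+(z),\\ f(z,\hat u_+(z))+\mu\,\hat u_+(z)^{p-1}, & x>\hat u_+(z),\end{cases}
\]
with primitive $\hat F(z,x)=\int_0^x\hat f(z,s)\,ds$ and Lipschitz functional
\[
\hat\varphi(u)=\int_\Omega G(Du)dz+\tfrac1p\int_\Omega(\xi+\mu)|u|^p dz+\tfrac1p\int_{\partial\Omega}\beta|u|^p d\sigma-\int_\Omega\hat F(z,u)dz,
\]
together with the one-sided analogues $\hat\varphi_\pm$ obtained by further truncating $\hat f$ at $0$ on the unused side. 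Testing with $(u-\hat u_+)^+$ and $(\hat u_--u)^+$, and invoking Lemma~\ref{lem2} with hypothesis $H(\beta)$, shows that every critical point of $\hat\varphi$ lies in $[\hat u_-,\hat u_+]$ and is a solution of~(\ref{eq1}).

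Following the scheme of Proposition~\ref{prop9}, $\hat\varphi_+$ is coercive and sequentially weakly lower semicontinuous, hence admits a global minimizer $z_+$; the trial element $t\hat u_1(q,\tilde\xi^+,\tilde\beta)$ with small $t>0$, combined with $H(a)(iv)$ and $H_2(iv)$, yields $\hat\varphi_+(z_+)<0$, and a sign test via $-z_+^-$ places $z_+\in[0,\hat u_+]$. Extremality (Proposition~\ref{prop12}) then forces $z_+=\hat u_+$, and since $\hat\varphi|_{[0,\hat u_+]}=\hat\varphi_+|_{[0,\hat u_+]}$ with $\hat u_+\in D_+$, this makes $\hat u_+$ a local $C^1(\overline\Omega)$-minimizer, hence by Proposition~\ref{prop4} a local $W^{1,p}(\Omega)$-minimizer of $\hat\varphi$; a symmetric argument applies to $\hat u_-$. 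With these two distinct strict local minima and the nonsmooth $C$-condition for $\hat\varphi$ (Corollary~\ref{cor8}), Theorem~\ref{th1} produces a critical point $y_0\in[\hat u_-,\hat u_+]$ of $\hat\varphi$, hence a solution of~(\ref{eq1}), at level $c\ge m_r>\max\{\hat\varphi(\hat u_-),\hat\varphi(\hat u_+)\}$, so in particular $y_0\ne\hat u_\pm$.

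The main obstacle is ruling out $y_0=0$, i.e., showing $c<0=\hat\varphi(0)$. I would achieve this by exhibiting a single admissible path along which $\hat\varphi$ is everywhere negative. Using $H_2(iv)$, pick $\varepsilon,\delta>0$ with $c^*-\varepsilon>\tilde c\bigl(\hat\lambda_2(q,\tilde\xi^+,\tilde\beta)+\delta\bigr)$, and apply Proposition~\ref{prop6} with parameters $(q,\tilde\xi^+,\tilde\beta)$ to obtain $\hat\gamma_0$ joining $\pm\hat u_1(q,\tilde\xi^+,\tilde\beta)$ along which $k_q\le\hat\lambda_2(q,\tilde\xi^+,\tilde\beta)+\delta$; then rescale by a small parameter $s>0$ and concatenate at the ends with linear segments joining $\pm s\hat u_1(q,\tilde\xi^+,\tilde\beta)$ to $\hat u_\pm$. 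On the central portion, expanding in $s$ using $H(a)(iv)$ and $H_2(iv)$ gives
\[
\hat\varphi(s\hat\gamma_0(t))\le\tfrac{s^q}{q}\bigl[\tilde c\bigl(\hat\lambda_2(q,\tilde\xi^+,\tilde\beta)+\delta\bigr)-(c^*-\varepsilon)\bigr]+o(s^q)<0
\]
for $s$ small, while the two linear end-segments are negative by continuity from $\hat\varphi(\hat u_\pm)<0$; taking the maximum over $t$ yields $c<0$. Finally, the extremality in Proposition~\ref{prop12} forbids $y_0$ from being of constant sign in $[\hat u_-,\hat u_+]\setminus\{0,\hat u_\pm\}$, so $y_0$ changes sign, and the Lieberman nonlinear regularity theory~[18] gives $y_0\in C^1(\overline\Omega)$.
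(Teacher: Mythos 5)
Your overall architecture matches the paper up to the decisive step: truncate at $\hat u_\pm$, identify $\hat u_\pm$ as local $W^{1,p}(\Omega)$-minimizers of the truncated functional via the one-sided functionals, extremality and Proposition~\ref{prop4}, run the nonsmooth mountain pass theorem, and rule out $y_0=0$ by exhibiting a path joining $\hat u_-$ to $\hat u_+$ on which the truncated energy is negative. The genuine gap is in your construction of that path, precisely at the two end pieces. You claim that the \emph{linear} segments joining $\pm s\hat u_1(q,\tilde\xi^+,\tilde\beta)$ to $\hat u_\pm$ ``are negative by continuity from $\hat\varphi(\hat u_\pm)<0$''. Continuity only gives negativity near the endpoints; on the straight segment between $s\hat u_1$ and $\hat u_+$ there is no reason whatsoever for $\hat\varphi$ to stay below $0$ (indeed, if it did for every such configuration, the mountain pass construction between $0$ and $\hat u_+$ would be largely vacuous). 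This is exactly the point where the paper invokes the nonsmooth second deformation theorem of Corvellec [38] applied to $\hat\psi_+$ (resp.\ $\hat\psi_-$): since, by extremality, the full critical set of the one-sided functional is $\{0,\hat u_+\}$ and its only critical value below $0$ is the global minimum attained only at $\hat u_+$, the sublevel set $\{\hat\psi_+<0\}$ can be deformed into $\{\hat u_+\}$, and the deformation trajectory of $\vartheta\hat u_1$ yields a (generally nonlinear) path from $\vartheta\hat u_1$ to $\hat u_+$ along which $\hat\psi_+\leqslant\hat\psi_+(\vartheta\hat u_1)<0$, hence $\hat\psi<0$. Note that this argument needs more than your statement ``every critical point of $\hat\varphi$ lies in $[\hat u_-,\hat u_+]$'': you must also identify $K_{\hat\varphi_+}=\{0,\hat u_+\}$ and $K_{\hat\varphi_-}=\{0,\hat u_-\}$ (the paper's Claim~1), which is what makes the deformation land on $\hat u_\pm$.

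Two further points need repair. First, the path $\hat\gamma_0$ furnished by Proposition~\ref{prop6} is only $W^{1,p}(\Omega)$-valued, so for small $s$ you cannot conclude $s\hat\gamma_0(t)\in[\hat u_-,\hat u_+]$ nor the pointwise bounds $|s\hat\gamma_0(t)(z)|\leqslant\delta$, $|sD\hat\gamma_0(t)(z)|\leqslant\delta$ that your ``expansion in $s$'' uses (both to apply the local estimates coming from $H(a)(iv)$, $H_2(iv)$ and to replace $\hat F$ by $F+\frac{\mu}{p}|x|^p$); when $q=p$ (the $p$-Laplacian case) the error terms are of the same order $s^p$ and cannot be absorbed. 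The paper fixes this by first approximating within $\hat\Gamma_c$, i.e.\ by $C^1(\overline\Omega)$-paths, using the density result of [37], and only then rescaling. Second, in your truncation you use $f(z,\hat u_\pm(z))$ outside the interval; since $f(z,\cdot)$ has jumps and $\hat u_\pm$ solve (\ref{eq1}) only with some selections $v_\pm(z)\in\partial F(z,\hat u_\pm(z))=[f_l,f_u](z,\hat u_\pm(z))$, the comparison test with $(u-\hat u_+)^+$ requires the truncated value to be $v_+(z)+\mu\hat u_+(z)^{p-1}$ (as in the paper), not $f(z,\hat u_+(z))+\mu\hat u_+(z)^{p-1}$; with your choice the inequality needed to conclude $u\leqslant\hat u_+$ can fail at jump points. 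This last item is easily fixed, but the missing deformation argument (and the $C^1$-density step) are essential ingredients, not routine details.
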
	
\begin{proof}
	Let $\hat{u}_+\in D_+$ and $\hat{u}_-\in-D_+$ be the two extremal constant sign solutions of problem (\ref{eq1}) produced in Proposition \ref{prop12}. Then we can find $v_+,v_-\subseteq L^{p'}(\Omega)\ \left(\frac{1}{p}+\frac{1}{p'}=1\right)$ such that
	\begin{eqnarray}
		&&v_+(z)\in\partial F(z,\hat{u}_+(z)),\ v_-(z)\in\partial F(z,\hat{u}_-(z))\ \mbox{for almost all}\ z\in\Omega,\label{eq57}\\
		&&\left\langle A(\hat{u}_+,h)\right\rangle+\int_{\Omega}\xi(z)\hat{u}_+^{p-1}hdz+\int_{\partial\Omega}\beta(z)\hat{u}_+^{p-1}hd\sigma=\int_{\Omega}v_+hdz,\label{eq58}\\
		&&\left\langle A(\hat{u}_-,h)\right\rangle+\int_{\Omega}\xi(z)|\hat{u}_-|^{p-2}\hat{u}_-hdz+\int_{\partial\Omega}\beta(z)|\hat{u}_-|^{p-2}\hat{u}_-hd\sigma=\int_{\Omega}v_-hdz\label{eq59}\\
		&&\mbox{for all}\ h\in W^{1,p}(\Omega).\nonumber
	\end{eqnarray}
	
	As before, let $\mu>||\xi||_{\infty}$ (see hypothesis $H(\xi)$) and consider the following measurable functions
	\begin{eqnarray}
		&&\hat{f}_+(z,x)=\left\{\begin{array}{ll}
			0&\mbox{if}\ x<0\\
			f(z,x)+\mu x^{p-1}&\mbox{if}\ 0\leq x\leq\hat{u}_+(z)\\
			v_+(z)+\mu\hat{u}_+(z)^{p-1}&\mbox{if}\ \hat{u}_+(z)<x,
		\end{array}\right.\label{eq60}\\
		&&\hat{f}_-(z,x)=\left\{\begin{array}{ll}
			v_-(z)+\mu|\hat{u}_-(z)|^{p-2}\hat{u}_-(z)&\mbox{if}\ x<\hat{u}_-(z)\\
			f(z,x)+\mu|x|^{p-2}x&\mbox{if}\ \hat{u}_-(z)\leq x\leq 0\\
			0&\mbox{if}\ 0<x,
		\end{array}\right.\label{eq61}\\
		&&\hat{f}(z,x)=\left\{\begin{array}{ll}
			v_-(z)+\mu|\hat{u}_-(z)|^{p-2}\hat{u}_-(z)&\mbox{if}\ x<\hat{u}_-(z)\\
			f(z,x)+\mu|x|^{p-2}x&\mbox{if}\ \hat{u}_-(z)\leq x\leq\hat{u}_+(z)\\
			v_+(z)+\mu\hat{u}_+(z)^{p-1}&\mbox{if}\ \hat{u}_+(z)<x.
		\end{array}\right.\label{eq62}
	\end{eqnarray}
	
	Let $\hat{F}_{\pm}(z,x)=\int^x_0\hat{f}_{\pm}(z,s)ds$ and $\hat{F}(z,x)=\int^x_0\hat{f}(z,s)ds$.
	
	We also consider  the corresponding truncation of the boundary term. So, we consider the Carath\'eodory function
	\begin{equation}\label{eq63}
		\hat{\beta}(z,x)=\left\{\begin{array}{ll}
			\beta(z)|\hat{u}_-(z)|^{p-2}\hat{u}_-(z)&\mbox{if}\ x<\hat{u}_-(z)\\
			\beta(z)|x|^{p-2}x&\mbox{if}\ \hat{u}_-(z)\leq x\leq\hat{u}_+(z)\\
			\beta(z)\hat{u}_+(z)^{p-1}&\mbox{if}\ \hat{u}_+(z)<x
		\end{array}\right.\ \mbox{for all}\ (z,x)\in\partial\Omega\times\RR\,.
	\end{equation}
	
	We set $\hat{B}(z,x)=\int^x_0\hat{\beta}(z,s)ds$ for all $(z,x)\in\partial\Omega\times\RR$.
	
	We introduce the locally Lipschitz functionals $\hat{\psi}_{\pm},\hat{\psi}:W^{1,p}(\Omega)\rightarrow\RR$ defined by
	\begin{eqnarray*}
		&&\hat{\psi}_{\pm}(u)=\int_{\Omega}G(Du)dz+\frac{1}{p}\int_{\Omega}(\xi(z)+\mu)|u|^pdz+\int_{\partial\Omega}\hat{B}(z,\pm u^{\pm})d\sigma-\int_{\Omega}\hat{F}_{\pm}(z,u)dz\\
		&&\hat{\psi}(u)=\int_{\Omega}G(Du)dz+\frac{1}{p}\int_{\Omega}(\xi(z)+\mu)|u|^pdz+\int_{\partial\Omega}\hat{B}(z,u)d\sigma-\int_{\Omega}\hat{F}(z,u)dz\\
		&&\mbox{for all}\ u\in W^{1,p}(\Omega).
	\end{eqnarray*}
	
	Also, we consider the following order intervals in $W^{1,p}(\Omega)$
	$T_+=[0,\hat{u}_+]$, $T_-=[\hat{u}_-,0]$, $T=[\hat{u}_-,\hat{u}_+].$
	\begin{claim}\label{claim1}
		$K_{\hat{\psi}}\subseteq T,\ K_{\hat{\psi}_+}=\{0,\hat{u}_+\},\ K_{\hat{\psi}_-}=\{0,\hat{u}_-\}$.
	\end{claim}
	
	Let $u\in K_{\hat{\psi}}$. We have
	\begin{eqnarray}\label{eq64}
		&&0\in\partial\hat{\psi}(u),\nonumber\\
		&\Rightarrow&\left\langle A(u),h\right\rangle+\int_{\Omega}(\xi(z)+\mu)|u|^{p-2}uhdz+\int_{\partial\Omega}\hat{\beta}(z,u)hd\sigma=\int_{\Omega}\hat{v}hdz
	\end{eqnarray}
	for all $h\in W^{1,p}(\Omega)$, with $\hat{v}\in L^{p'}(\Omega),\hat{v}(z)\in\partial\hat{F}(z,u(z))$ for almost all $z\in\Omega$.
	
	In (\ref{eq64}) we choose $h=(u-\hat{u}_+)^+\in W^{1,p}(\Omega)$. Using (\ref{eq62}) and (\ref{eq63}), we have
	\begin{eqnarray*}
		&&\left\langle A(u),(u-\hat{u}_+)^+\right\rangle+\int_{\Omega}(\xi(z)+\mu)u^{p-1}(u-\hat{u}_+)^+dz+\int_{\partial\Omega}\beta(z)\hat{u}_+^{p-1}(u-\hat{u}_+)^+d\sigma\\
		&&=\int_{\Omega}(v_+(z)+\mu\hat{u}_+^{p-1})(u-\hat{u}_+)^+dz\\
		&&=\left\langle A(\hat{u}_+),(u-\hat{u}_+)^+\right\rangle+\int_{\Omega}(\xi(z)+\mu)\hat{u}^{p-1}_+(u-\hat{u}_+)^+dz+\int_{\partial\Omega}\beta(z)\hat{u}^{p-1}_+(u-\hat{u}_+)^+d\sigma\\
		&&(\mbox{see (\ref{eq57}), (\ref{eq58})}),\\
		&\Rightarrow&\left\langle A(u)-A(\hat{u}_+),(u-\hat{u}_+)^+\right\rangle+\int_{\Omega}(\xi(z)+\mu)(u^{p-1}-\hat{u}_+^{p-1})(u-\hat{u}_+)^+dz=0,\\
		&\Rightarrow&u\leq\hat{u}\ (\mbox{recall that}\ \mu>||\xi||_{\infty}).
	\end{eqnarray*}
	
	In a similar fashion, choosing $h=(\hat{u}_--u)^+\in W^{1,p}(\Omega)$ in (\ref{eq64}) and using (\ref{eq59}) and (\ref{eq61}), we obtain
	$\hat{u}_-\leq u$, hence
		$u\in T$. We deduce that
		$K_{\hat{\psi}}\subseteq T$.
	Arguing similarly, we show that
	$K_{\hat{\psi}_+}\subseteq T_+$ and $K_{\hat{\psi}_-}\subseteq T_-.$
	
	From (\ref{eq57}), (\ref{eq60}), (\ref{eq61}) and the extremality of the solutions $\hat{u}_+\in D_+$ and $\hat{u}_-\in-D_+$, we conclude that
	$K_{\hat{\psi}_+}=\{0,\hat{u}_+\}$ and $K_{\hat{\psi}_-}=\{0,\hat{u}_-\}.$
	This proves Claim \ref{claim1}.
	\begin{claim}\label{claim2}
		$\hat{u}_+$ and $\hat{u}_-$ are local minimizers of $\hat{\psi}$.
	\end{claim}
	
	It is clear from Corollary \ref{cor3}, (\ref{eq60}) and the fact that $\mu>||\xi||_{\infty}$, that $\hat{\psi}_+$ is coercive. Moreover, using the Sobolev embedding theorem and the compactness of the trace map, we see that $\hat{\psi}_+$ is sequentially weakly lower semicontinuous. So, by the Weierstrass-Tonelli theorem, we can find $\tilde{u}_+\in W^{1,p}(\Omega)$ such that
	\begin{equation}\label{eq65}
		\hat{\psi}_+(\tilde{u}_+)=\inf[\hat{\psi}_+(u):u\in W^{1,p}(\Omega)]=\hat{m}_+
	\end{equation}
	
	Hypothesis $H_2(iv)$, as in the proof of Proposition \ref{prop9}, implies that
	$\hat{m}_+=\hat{\psi}_+(\tilde{u}_+)<0=\hat{\psi}_+(0)$, hence
		$\tilde{u}_+\not=0$.
		We deduce that $\tilde{u}_+=\hat{u}_+$ (see Claim \ref{claim1} and recall that $\tilde{u}_+\in K_{\hat{\psi}_+}$, see (\ref{eq65})).

	Note that $\hat{\psi}_+|_{C_+}=\hat{\psi}|_{C_+}$ (see (\ref{eq60}), (\ref{eq62})) and recall that $\hat{u}_+\in D_+$. So, it follows that
	$\hat{u}_+$ is a local $C^1(\overline{\Omega})$-minimizer of $\hat{\psi}$. By Proposition \ref{prop4} we deduce that
		$\hat{u}_+$ is a local $W^{1,p}(\Omega)$-minimizer of $\hat{\psi}$.

	Similarly for $\hat{u}_-\in-D_+$, using this time the functional $\hat{\psi}_-$ and (\ref{eq61}).
	
	This proves Claim \ref{claim2}.
	
	Without any loss of generality, we may assume that
	$\hat{\psi}(\hat{u}_-)\leq\hat{\psi}(\hat{u}_+).$
	The reasoning is similar if the opposite inequality holds.
	
	We assume that $K_{\hat{\psi}}$ is finite. Otherwise on account of (\ref{eq62}), Claim \ref{claim1} and the extremality of $\hat{u}_{\pm}$, we already have an infinity of nodal solutions. Claim \ref{claim2} implies that we can find  small $\rho\in(0,1)$ such that
	\begin{equation}\label{eq66}
\hat{\psi}(\hat{u}_-)\leq\hat{\psi}(\hat{u}_+)<\inf[\hat{\psi}(u):||u-\hat{u}_+||=\rho]=\hat{m}_{\rho},\ \rho<||\hat{u}_--\hat{u}_+||
	\end{equation}
	(see Aizicovici, Papageorgiou and Staicu [36], proof of Proposition 29). From (\ref{eq62}) and Corollary \ref{cor3} it is clear that
	\begin{equation}\label{eq67}
 {		\hat{\psi}\ \mbox{is coercive}
		\Rightarrow\hat{\psi}\ \mbox{satisfies the nonsmooth C-condition}.   }
	\end{equation}
	
	Then (\ref{eq66}) and (\ref{eq67}) permit the use of Theorem \ref{th1}. Therefore we can find $y_0\in W^{1,p}(\Omega)$ such that
	\begin{equation}\label{eq68}
		y_0\in K_{\hat{\psi}}\subseteq T\ \mbox{(see Claim \ref{claim1}) and}\ \hat{m}_{\rho}\leq\hat{\psi}(y_0).
	\end{equation}
	
	It follows from (\ref{eq63}), (\ref{eq64}) and (\ref{eq62}) that
	$y_0\ \mbox{is a solution of (\ref{eq1}) and}\ y_0\notin\{\hat{u}_+,\hat{u}_-\}.$
	
	Evidently, if we show that $y_0\neq 0$, then $y_0$ is a nodal solution of (\ref{eq1}) (see (\ref{eq68})). From Theorem \ref{th1}, we have
	\begin{eqnarray}\label{eq69}
	    &&\hat{\psi}(y_0)=\inf\limits_{\gamma\in\Gamma}\max\limits_{0\leq t\leq 1}\hat{\psi}(\gamma(t))
	\end{eqnarray}
with $\Gamma=\{\gamma\in C([0,1],W^{1,p}(\Omega)):\gamma(0)=\hat{u}_-,\gamma(1)=\hat{u}_+\}$.

According to (\ref{eq69}), to show the nontriviality of $y_0$, it suffices to produce a path $\gamma_*\in\Gamma$ such that $\hat{\psi}|_{\gamma_*}<0$. In what follows, we construct such a path in $\Gamma$.

Recall (see Section 2) that
$\partial B^{L^q}_1=\{u\in L^p(\Omega):||u||_q=1\}$ and $M=W^{1,p}(\Omega)\cap \partial B^{L^q}_1.$
Also, we define
$M_c=M\cap C^1(\overline{\Omega}).$

We consider the following two sets of paths:
\begin{eqnarray*}
&&\hat{\Gamma}=\{\hat{\gamma}\in C([-1,1],M):\hat{\gamma}(-1)=-\hat{u}_1(q,\tilde{\xi}^+,\tilde{\beta}),\hat{\gamma}(1)=\hat{u}_1(q,\tilde{\xi}^+,\tilde{\beta})\},\\
&&\hat{\Gamma}_c=\{\hat{\gamma}\in C([-1,1],M_c):\hat{\gamma}(-1)=-\hat{u}_1(q,\tilde{\xi}^+,\tilde{\beta}),\hat{\gamma}(1)=
\hat{u}_1(q,\tilde{\xi}^+,\tilde{\beta})\}.
\end{eqnarray*}
    From Papageorgiou and R\u adulescu [37], we know that
    $\hat{\Gamma}_c\ \mbox{is dense in}\ \hat{\Gamma}.$
    Then Proposition \ref{eq6} implies that given $\hat{\delta}>0$, we can find $\hat{\gamma}_0\in\hat{\Gamma}_c$ such that
    \begin{equation}\label{eq70}
    \max\limits_{0\leq t\leq 1}\tilde{c}\tilde{k}^+_q(\hat{\gamma}_0(t))\leq\tilde{c}\hat{\lambda}_2(q,\tilde{\xi}^+,\tilde{\beta})+\hat{\delta}
    \end{equation}
with $\tilde{k}^+_q(u)=||Du||^q_q+\int_{\Omega}\tilde{\xi}^+(z)|u|^pd\sigma+\int_{\partial\Omega}\tilde{\beta}(z)|u|^pd\sigma$ for all $u\in W^{1,p}(\Omega)$.

Hypothesis $H(a)(iv)$ implies that given $\epsilon>0$, we can find $\delta_1=\delta_1(\epsilon)\in(0,1)$ such that
\begin{equation}\label{eq71}
G(y)\leq\frac{\tilde{c}+\epsilon}{q}|y|^q\ \mbox{for all}\ y\in\RR^N\ \mbox{with}\ |y|\leq\delta_1.
\end{equation}

Hypothesis $H_2(iv)$ say that we can find $c^*_1\in(\tilde{c}\hat{\lambda}_2(q,\tilde{\xi}^+,\tilde{\beta}),c^*)$ and $\delta_2=\delta_2(\epsilon)\in(0,1)$ such that
\begin{equation}\label{eq72}
\frac{1}{q}c^*_1|x|^q\leq F(z,x)\ \mbox{for almost all}\ z\in\Omega,\ \mbox{all}\ |x|\leq\delta_2.
\end{equation}

Let $0<\delta\leq \min\{\delta_1,\delta_2,\min\limits_{\overline{\Omega}}\hat{u}_+,\min\limits_{\overline{\Omega}}(-\hat{u}_-)\}$ (recall that $\hat{u}_+\in D_+,\hat{u}_-\in-D_+$). Since $\hat{\gamma}_0\in\hat{\Gamma}_c,\hat{u}_+\in D_+$ and $\hat{u}_-\in-D_+$, we can find $\vartheta\in(0,1)$ small such that
\begin{eqnarray}\label{eq73}
    &&\vartheta\hat{\gamma}_0(t)\in T=[\hat{u}_-,\hat{u}_+],\ \vartheta|\hat{\gamma}_0(t)(z)|\leq\delta,\ \vartheta|D\hat{\gamma}_0(t)(z)|\leq\delta\\
    &&\mbox{for all}\ t\in[-1,1],\ \mbox{all}\ z\in\overline{\Omega}.\nonumber
\end{eqnarray}

Then for $t\in[-1,1]$ we have
\begin{eqnarray}\label{eq74}
    \hat{\psi}(\vartheta\hat{\gamma}_0(t))&=&\int_{\Omega}G(\vartheta D\hat{\gamma}_0(t))dz+\frac{\vartheta^p}{p}\int_{\Omega}\xi(z)|\hat{\gamma}_0(t)|^pdz+\frac{\vartheta^p}{p}\int_{\partial\Omega}\beta(z)|\hat{\gamma}_0(t)|^pd\sigma-\nonumber\\
		&&\int_{\Omega}F(z,\vartheta\hat{\gamma}_0(t))dz\ (\mbox{see (\ref{eq62}), (\ref{eq63}), (\ref{eq68}) and recall the choice of}\ \delta>0)\nonumber\\
    &\leq&\frac{\tilde{c}+\epsilon}{q}\vartheta^q||D\hat{\gamma}_0(t)||^q_q+\frac{\tilde{c}\vartheta^q}{q}\int_{\Omega}\tilde{\xi}^+(z)|\hat{\gamma}_0(t)|^qdz+\nonumber\\
		&&\frac{\tilde{c}\vartheta^q}{q}\int_{\partial\Omega}\tilde{\beta}(z)|\hat{\gamma}_0(t)|^qd\sigma-\frac{c^*_1\vartheta^q}{q}||\hat{\gamma}_0(t)||^q_q\nonumber\\
    &&(\mbox{see (\ref{eq71}), (\ref{eq72}), (\ref{eq73}) and recall}\ \delta,\vartheta\in(0,1),q\leq p)\nonumber\\
    &\leq&\frac{\vartheta^q}{q}[(\tilde{c}\hat{\lambda}_2(q,\tilde{\xi}^+,\tilde{\beta})+\hat{\delta})+\epsilon c_{13}-c^*_1]\ \mbox{for some}\ c_{13}>0\\
    &&(\mbox{see (\ref{eq70}), (\ref{eq73}) and recall that}\ ||\hat{\gamma}_0(t)||_q=1\ \mbox{for some}\ c_{13}>0).\nonumber
\end{eqnarray}

We choose  small $\epsilon,\,\hat{\delta}>0$ so that
\begin{equation}\label{eq75}
    \hat{\psi}(\vartheta\hat{\gamma}_0(t))<0\ \mbox{for all}\ t\in[-1,1]\ (\mbox{see (\ref{eq74})}).
\end{equation}

We set $\gamma_0=\vartheta\hat{\gamma}_0$. This is a continuous path in $W^{1,p}(\Omega)$ (in fact in $C^1(\overline{\Omega})$), which connects $-\vartheta\hat{u}_1(q,\tilde{\xi}^+,\tilde{\beta})$ and $\vartheta \hat{u}_1(q,\tilde{\xi}^+,\tilde{\beta})$. Along this path we have
\begin{equation}\label{eq76}
\hat{\psi}|_{\gamma_0}<0.
\end{equation}

Recall that
$\hat{\psi}_+(\hat{u}_+)=\hat{m}_+<0=\hat{\psi}_+(0)$ (\mbox{see (\ref{eq65})}).
The functional $\hat{\psi}_+$ is coercive (see Corollary \ref{cor3} and (\ref{eq60})). So, it satisfies the nonsmooth C-condition. Let $\dot{\hat{\psi}}^{\eta}_+=\{u\in W^{1,p}(\Omega):\hat{\psi}_+(u)<\eta\}$ for any $\eta\in\RR$. Invoking the nonsmooth second deformation theorem of Corvellec [38], we can find a deformation $h:[0,1]\times\dot{\hat{\psi}}^0_+\rightarrow\dot{\hat{\psi}}^0_+$ such that
\begin{eqnarray}
    \bullet&& h(t,\cdot)|_{K^{\hat{m}_+}_{\hat{\psi}_+}}={\rm id}|_{K^{\hat{m}_+}_{\hat{\psi}_+}}\ (K^{\hat{m}_+}_{\hat{\psi}_+}=\{u\in K_{\hat{\psi}_+}:\hat{\psi}_+(u)=\hat{m}_+\});\label{eq77}\\
    \bullet&& h(1,\dot{\hat{\psi}}^0_+)\subseteq\dot{\hat{\psi}}^{\hat{m}_+}_+\cup K^{\hat{m}_+}_{\hat{\psi}_+}=\{\hat{u}_+\}\ (\mbox{see Claim \ref{claim1} and (\ref{eq65})});\label{eq78}\\
    \bullet&&\hat{\psi}_+(h(t,u))\leq\hat{\psi}_+(u)\ \mbox{for all}\ t\in[0,1],\ \mbox{all}\ u\in\dot{\hat{\psi}}^0_+.\label{eq79}
\end{eqnarray}

Let $\gamma_+(t)=h(t,\vartheta\hat{u}_1(q,\tilde{\xi}^+,\tilde{\beta}))^+$ for all $t\in[0,1]$. This is a well-defined path (see (\ref{eq76})) and of course it is continuous. Also, we have
\begin{itemize}
    \item $\gamma_+(0)=h(0,\vartheta\hat{u}_1(q,\tilde{\xi}^+,\tilde{\beta}))^+=\vartheta\hat{u}_1(q,
        \tilde{\xi}^+,\tilde{\beta})$
    (since $h$ is a deformation, see [13], Definition 4.11.2, p. 645);
    \item $\gamma_+(1)=h(1,\vartheta\hat{u}_1(q,\tilde{\xi}^+,\tilde{\beta}))^+=\hat{u}_+$ (see (\ref{eq78}) and recall $\hat{u}_+\in D_+$);
    \item $\hat{\psi}(\gamma_+(t))=\hat{\psi}_+(\gamma_+(t))\leq\hat{\psi}_+(\vartheta\hat{u}_1(q,\tilde{\xi}^+,\tilde{\beta}))=\hat{\psi}(\vartheta\hat{u}_1(q,\tilde{\xi}^+,\tilde{\beta})$ (see (\ref{eq60}), (\ref{eq62}), (\ref{eq79})).
\end{itemize}

So, $\gamma_+$ is a continuous path in $W^{1,p}(\Omega)$ connecting $\vartheta\hat{u}_1(q,\tilde{\xi}^+,\tilde{\beta})$ and $\hat{u}_+$ (see (\ref{eq77})), and along this path we have
\begin{equation}\label{eq80}
    \psi|_{\gamma_+}<0.
\end{equation}

Similarly, we produce another continuous path $\gamma_-$ in $W^{1,p}(\Omega)$ which connects $-\vartheta\hat{u}_1(q,\tilde{\xi}^+,\tilde{\beta})$ and $\hat{u}_-$ and along which we have
\begin{equation}\label{eq81}
    \psi|_{\gamma_-}<0.
\end{equation}

We concatenate $\gamma_-,\gamma_0,\gamma_+$ and generate $\gamma_*\in\Gamma$ such that
$\psi|_{\gamma_*}<0$ (\mbox{see (\ref{eq76}), (\ref{eq80}), (\ref{eq81})}). We conclude that
    $y_0$ is nodal.
As before (see the proof of Proposition \ref{prop9}), using the nonlinear regularity theory, we have
$y_0\in[\hat{u}_-,\hat{u}_+]\cap C^1(\overline{\Omega}).$
\end{proof}

Summarizing, we  have established the following multiplicity theorem for problem (\ref{eq1}).
\begin{theorem}\label{th14}
    If hypotheses $H(a),H(\xi),H(\beta),H_2$ hold, then problem (\ref{eq1}) has at least three nontrivial smooth solutions
    $$u_0\in D_+,\ v_0\in-D_+\ \mbox{and}\ y_0\in[v_0,u_0]\cap C^1(\overline{\Omega})\ \mbox{nodal}.$$
\end{theorem}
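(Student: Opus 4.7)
The plan is to combine the three main propositions of the paper into a single multiplicity statement, using that hypotheses $H_2$ are strictly stronger than $H_1$ (since for an integrand of the form $F(z,x)=\int_0^x f(z,s)\,ds$ with only jump discontinuities one has $\partial F(z,x)=[f_l(z,x),f_u(z,x)]$, as already observed in the remark following hypotheses $H_2$). Consequently, every result proved in Section 4 under $H_1$ is automatically available under $H_2$.

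First, I would invoke Proposition \ref{prop9} to obtain two constant sign smooth solutions $u_0\in D_+$ and $v_0\in -D_+$ of problem (\ref{eq1}), each of which is a local minimizer of the energy functional $\varphi$. The coercivity of $\hat{\varphi}_\pm$ provided by Proposition \ref{prop7} together with the direct method delivers these solutions as global minimizers of the truncated functionals $\hat{\varphi}_\pm$, and the nonlinear regularity theory of Lieberman together with the Pucci--Serrin strong maximum principle places them in $D_+$ and $-D_+$ respectively.

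Next, Proposition \ref{prop12} yields extremal constant sign solutions $\hat{u}_+\in D_+$ (smallest positive) and $\hat{u}_-\in -D_+$ (biggest negative). These are crucial because they define the order interval $T=[\hat{u}_-,\hat{u}_+]$ inside which the sign changing solution will be sought. At this stage I would pass from $u_0, v_0$ to $\hat{u}_+, \hat{u}_-$ (noting $\hat{u}_+\le u_0$ and $v_0\le\hat{u}_-$, so relabelling is harmless for the multiplicity count).

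Finally, the existence of the nodal solution $y_0\in[\hat{u}_-,\hat{u}_+]\cap C^1(\overline{\Omega})$ is exactly the content of Proposition \ref{prop13}, which is where the stronger hypotheses $H_2$ are genuinely required (they enter through the truncation of $f$, the use of the variational characterization of $\hat{\lambda}_2(q,\tilde{\xi}^+,\tilde{\beta})$ via Proposition \ref{prop6}, and the construction of the descending path along which $\hat{\psi}<0$). Since $\hat{u}_\pm$ are extremal and $y_0\in[\hat{u}_-,\hat{u}_+]\setminus\{0,\hat{u}_\pm\}$, the solution $y_0$ must change sign. Concatenating the three propositions and relabelling $\hat{u}_+\to u_0$, $\hat{u}_-\to v_0$ gives the claimed three nontrivial smooth solutions. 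The only real ``work'' in the proof is therefore bookkeeping; there is no new analytic obstacle because Propositions \ref{prop9}, \ref{prop12} and \ref{prop13} already carry the full weight of the argument.
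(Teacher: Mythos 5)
Your proposal is correct and follows essentially the same route as the paper: Theorem \ref{th14} is indeed obtained by simply combining Proposition \ref{prop9} (constant sign solutions in $D_+$ and $-D_+$), Proposition \ref{prop12} (extremal constant sign solutions), and Proposition \ref{prop13} (the nodal solution in the order interval), after noting that hypotheses $H_2$ imply $H_1$. Your bookkeeping remark about relabelling (or, equivalently, observing that $[\hat{u}_-,\hat{u}_+]\subseteq[v_0,u_0]$ by extremality) is exactly the harmless step the paper leaves implicit.
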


\begin{acknowledgements} This research was supported by the Slovenian Research Agency grants P1-0292, J1-8131, and J1-7025. V.D. R\u adulescu acknowledges the support through a grant of the Ministry of Research and Innovation, CNCS--UEFISCDI, project number PN-III-P4-ID-PCE-2016-0130, within PNCDI III.
\end{acknowledgements}

\end{document}